\documentclass{amsart}

\usepackage{xspace}
\usepackage{amssymb}
\usepackage{amsmath,amscd}
\usepackage{amsfonts}
\usepackage{amsthm}
\usepackage{keyval}
\usepackage{array}
\usepackage[all]{xy}
\usepackage{setspace}
\usepackage{enumerate}

\title{A modular description of $ER(2)$} 

%please make separate author, address, email blocks for each author
\author{Romie Banerjee}  
\address{School of Mathematics, Tata Institute of Fundamental Research, Mumbai 400005} 
\email{banerjee@math.tifr.res.in}   % You may include the URL for your home page along with your email
%\thanks{} % \thanks entries are to acknowledge grants. 
%\date{\today}
\keywords{Real Johnson-Wilson theories, Lubin-Tate spaces, topological modular forms, algebraic stacks}

\subjclass{55N20, 55N91, 14K10, 14H52}

\begin{document}

\begin{abstract}  
We give a description of the maximally unramified extension of completed second Real Johnson-Wilson theory using supersingular elliptic curves with $\Gamma_0(3)$-level structures.
\end{abstract} 

\maketitle

%\setcounter{tocdepth}{1}
%\tableofcontents

\newtheorem{prop}{Proposition}[section]
\newtheorem{lemma}{Lemma}[section]
\newtheorem{theorem}{Theorem}[section]
\newtheorem{definition}{Definition}[section]
\newtheorem{cor}{Corollary}[section]
\newtheorem{remark}{Remark}[section]

\newcommand{\quo}{\big/\!\!\big/}

\section{Introduction}

\subsection{Real Johnson-Wilson theories}

Complex conjugation acts on the complex $K$-theory spectrum $KU$ and the homotopy fixed points of this action is $KO$. In fact the complex orientation $MU \rightarrow KU$ is equivariant with respect to this $C_2$-action. Localizing at the prime $2$, there is a $C_2$-equivariant orientation $$BP \rightarrow KU_{(2)}$$ with kernel $\langle v_i, i > 1 \rangle$.  Hu and Kriz (\cite{Hu1},\cite{HK}) have realized this as a map of honest $C_2$-equivariant spectra $$BP\mathbb{R} \rightarrow K\mathbb{R}$$ where $K\mathbb{R}$ is Atiyah's {\it real} $K$-theory (\cite{Atiyah}) localized at the prime $2$. They have further generalized this to a map of equivariant spectra $$BP\mathbb{R} \rightarrow E\mathbb{R}(n)$$ which is an equivariant refinement of the orientation $BP \rightarrow E(n)$ with kernel $\langle v_i , i>n \rangle$. Here $E(n)$ is the $2(2^n-1)$-periodic Johnson-Wilson theory. It has coefficients $\mathbb{Z}_{(2)}[v_1,\ldots,v_n^{\pm1}]$ with $|v_i|=2(2^i-1)$. 

The underlying nonequivariant spectrum of $E\mathbb{R}(n)$ is $E(n)$ and the homotopy fixed points with respect to the complex conjugation action is denoted by $ER(n)$. This is $2^{n+2}(2^n-1)$ periodic. The spectrum $ER(1)$ is $KO_{(2)}$. Kitchloo and Wilson have done extensive computations with the spectrum $ER(2)$ (\cite{KW}).

The spectrum $ER(n)$ is homotopy commutative, but its completion is an $E_{\infty}$-ring spectrum. In fact Averett (\cite{MA}) has shown that after completion the fixed points inclusion map $ER(n) \rightarrow E(n)$ is a higher chromatic generalization of the $C_2$-Galois extension (in the sense of \cite{Rognes}) $KO \rightarrow KU$.

%\begin{enumerate}[]
%\item {\it There is a $K(n)$-local $C_2$-Galois extension, $L_{K(n)}ER(n) \rightarrow L_{K(n)}E(n)$ of $2$-local commutative $S$-algebras.}
%\end{enumerate}

More elaborately, let $E_n$ be the $n$-th Morava $E$-theory spectrum associated to the Lubin-Tate space of deformations of the Honda formal group $\Gamma_n$ over $\mathbb{F}_{2^n}$. This Lubin-Tate space is noncanonically isomorphic to $\hbox{Spf}\,\pi_0(E_n) =\hbox{Spf}\,W(\mathbb{F}_{2^n})[[u_1,\ldots,u_{n-1}]]$ where $W(k)$ denotes the Witt ring of $k$. Hopkins-Miller theory gives a unique $E_{\infty}$-ring structure on $E_n$, with coefficients $$\pi_*E_n = \pi_0(E_n)[u^{\pm 1}]$$ where $|u|=2$. The $n$-th Morava stabilizer group $\mathbb{S}_n = \mathcal{O}^{\times}_{D_{\frac{1}{n}},\mathbb{Q}_2}$
(the group of units in the maximal order of the division algebra over $\mathbb{Q}_2$ with Hasse invariant $\frac{1}{n}$) and the Galois group $Gal= \hbox{Gal}(\mathbb{F}_{2^n}/\mathbb{F}_2)$ act on the Lubin-Tate space. By Hopkins-Miller theory this lifts to an action of the extended stabilizer group $\mathbb{S}_n \rtimes Gal$ on the spectrum $E_n$ through $E_{\infty}$-ring maps, upto contractible choices.

There is a subgroup $H(n) = \mathbb{F}_{2^n}^{\times} \rtimes Gal$ of the extended stabilizer and a central element $[-1]_{\Gamma_n} \in \mathbb{S}_n$ corresponding to the formal inverse of the Honda formal group law. Averett shows that the standard equivalence $L_{K(n)}E(n) \simeq E_n^{hH(n)}$ is equivariant with respect to the $C_2$ action coming from $[-1]_{\Gamma_n}$. In other words there is an equivalence 
\begin{equation}\label{Averett}
L_{K(n)}ER(n) \simeq E_n^{h(C_2 \times H(n))}.
\end{equation}

\subsection{Main result}
In this paper we want to show that the Real Johnson-Wilson theory $ER(2)$ arises from certain modular curves in the same way the spectrum $TMF$ of topological modular forms arises from the moduli stack of smooth elliptic curves \cite{HM}.

The stabilizer group $\mathbb{S}_2$ has the maximal finite subgroup $\widehat{A}_4= Q_8 \rtimes C_3$ where $C_3$ acts on $Q_8$ by conjugation, and $H(2) = C_3 \rtimes Gal \subset \widehat{A}_4 \rtimes Gal$.

Completion of $TMF$ at the chromatic height $2$ admits the description (for the primes $2$ and $3$)

\begin{equation}
L_{K(2)}TMF = \left( \prod_{x \in {Ell}^{ss}(\overline{\mathbb{F}}_p)} E_2^{h\,Aut(x)} \right) ^{h\,{Gal}(\overline{\mathbb{F}}_p/\mathbb{F}_p)}.
\end{equation}

Here ${Ell}^{ss}$ is the locus of supersingular elliptic curves at the prime $p$ and $E_2$ is the Hopkins-Miller spectrum associated to the Lubin-Tate space of deformations of the formal group law of $x$  over $\overline{\mathbb{F}}_p$. Since there is a unique isomorphism class of elliptic curves $x$ at these primes and $Aut(x)$ is the maximal finite subgroup of $\mathbb{S}_2$, the right hand side is the higher real $K$-theory $EO_2^{h\,Gal(\overline{\mathbb{F}}_2/\mathbb{F}_2)}$. 

We want to describe completed $ER(2)$ in terms of modular curves. This raises the question: {\it Given $p=2$, does there exist an \'etale cover of $\mathcal{X}$ of $\mathsf{Ell}^{ss}$ so that there exists $x \in \mathcal{X}$ for which $\hbox{Aut}(x)$ is the subgroup $C_2 \times C_3$ of $\mathbb{S}_2$?}
 
%In \cite{MR} Mahowald and Rezk does various computations with topological modular forms with level structures and observes that spectrum $TMF_0(3)$ associated to elliptic curves with $\Gamma_0(3)$ level structures at the prime $2$ is very much like the spectrum $ER(2)$. Indeed there is a $C_2$-Galois extension $TMF_0(3) \rightarrow TMF_1(3)$ which is part of a fibration very similar to the Kitchloo-Wilson fibrations for {\it real} theories (see \cite[Remark 4.2]{MR} and \cite{KW1}). 

We state the main theorem here.

\begin{theorem}\label{main}
For a $p$-local commutative $S$-algebra $A$, let $A(\mu_{\infty,p})$ denote the $S$-algebra obtained by adjoining all the roots of unity of order prime to $p$. Then $L_{K(2)}ER(2)(\mu_{\infty,2})$ is an algebra over $TMF$.

There is a $K(2)$-local $C_2$-Galois extension $$L_{K(2)}TMF_0(3)(\mu_{\infty,2}) \rightarrow L_{K(2)}TMF_1(3)(\mu_{\infty,2})$$ which is isomorphic to the extension $L_{K(2)}ER(2)(\mu_{\infty,2}) \rightarrow L_{K(2)}E(2)(\mu_{\infty,2})$.
\end{theorem}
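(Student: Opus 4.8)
The plan is to build everything from the identification of $K(2)$-local forms of $TMF$-type spectra with homotopy fixed points of $E_2$ under finite subgroups of the extended Morava stabilizer group $\mathbb{S}_2 \rtimes Gal$, and then to match subgroups. First, recall Averett's equivalence~\eqref{Averett}: $L_{K(2)}ER(2) \simeq E_2^{h(C_2 \times H(2))}$ where $H(2) = C_3 \rtimes Gal$ and the $C_2$ is generated by the central element $[-1]_{\Gamma_2}$, so $L_{K(2)}ER(2) \simeq E_2^{h((C_2\times C_3)\rtimes Gal)}$ and $L_{K(2)}E(2) \simeq E_2^{hH(2)}$. Adjoining all prime-to-$2$ roots of unity has the effect of passing to the maximal unramified extension, i.e. replacing $E_2$ over $\overline{\mathbb{F}}_2$-data and killing the $Gal$-homotopy-fixed-points, so that $L_{K(2)}ER(2)(\mu_{\infty,2}) \simeq E_2^{h(C_2\times C_3)}$ and $L_{K(2)}E(2)(\mu_{\infty,2}) \simeq E_2^{hC_3}$, with the residual $C_2 = C_2\times C_3 / C_3$ acting as the Galois group of the extension on the right.

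Second, I would produce the modular side. At $p=2$ there is a unique supersingular elliptic curve over $\overline{\mathbb{F}}_2$, with automorphism group $\widehat{A}_4 = Q_8\rtimes C_3$, the maximal finite subgroup of $\mathbb{S}_2$; this gives $L_{K(2)}TMF \simeq E_2^{h\widehat{A}_4\rtimes Gal}$ and after adjoining prime-to-$2$ roots of unity $L_{K(2)}TMF(\mu_{\infty,2}) \simeq E_2^{h\widehat{A}_4} = EO_2$. The key input is to show that a $\Gamma_0(3)$-structure on the supersingular curve rigidifies the automorphism group down to $C_2\times C_3$: the automorphisms of the supersingular curve permute its order-$3$ subgroups, and the stabilizer of a chosen $\Gamma_0(3)$-subgroup is exactly $C_2\times C_3$ (the $C_3$ acting on the $3$-torsion, the $C_2$ being $[-1]$ which fixes every subgroup). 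Passing to a $\Gamma_1(3)$-structure further rigidifies to $C_3$, since a generator of the chosen subgroup is fixed only by automorphisms acting trivially on it, and $[-1]$ inverts the generator. Thus $L_{K(2)}TMF_0(3)(\mu_{\infty,2}) \simeq E_2^{h(C_2\times C_3)}$ and $L_{K(2)}TMF_1(3)(\mu_{\infty,2}) \simeq E_2^{hC_3}$, with the forgetful map $TMF_1(3)\to TMF_0(3)$ inducing on homotopy fixed points exactly the inclusion of $C_3$ in $C_2\times C_3$, i.e. a faithful $C_2$-action realizing a $K(2)$-local $C_2$-Galois extension by Rognes's Galois descent criterion (the relevant finite-group homotopy-fixed-point extensions of $E_n$ are Galois, as in \cite{Rognes}).

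Third, I would assemble the comparison. Both extensions are now presented as $E_2^{hC_3} \to E_2^{h(C_2\times C_3)}$ reversed, i.e. $E_2^{h(C_2\times C_3)} \to E_2^{hC_3}$, with the same $C_2$-action in each case (the $C_2$ generated by $[-1]_{\Gamma_2}$, which is intrinsic and does not depend on a choice of model). The $TMF$-algebra structure on $L_{K(2)}ER(2)(\mu_{\infty,2})$ then follows because $L_{K(2)}TMF(\mu_{\infty,2}) = E_2^{h\widehat{A}_4} \to E_2^{h(C_2\times C_3)} = L_{K(2)}ER(2)(\mu_{\infty,2})$ is the homotopy-fixed-point map for the subgroup inclusion $C_2\times C_3 \hookrightarrow \widehat{A}_4$, which exists because $C_2\times C_3$ is realized inside $\widehat{A}_4 = Q_8\rtimes C_3$ (take the central $C_2\subset Q_8$ together with $C_3$).

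The main obstacle I anticipate is the second step: establishing the precise level-structure/automorphism dictionary at the bad prime $p=2$, where $\Gamma_0(3)$ and $\Gamma_1(3)$ moduli problems are representable and well-behaved but the supersingular reduction must be handled carefully, and where one must verify that the specific $C_2\times C_3$ arising as the stabilizer of a $\Gamma_0(3)$-subgroup is conjugate in $\mathbb{S}_2\rtimes Gal$ to the $C_2\times C_3$ appearing in Averett's description of $ER(2)$ — in particular that the $C_2$'s match, i.e. that the $C_2$ stabilizing the level structure is generated by $[-1]_{\Gamma_2}$ and not some other order-$2$ element. Relatedly, one must check that adjoining $\mu_{\infty,2}$ interacts correctly with the $TMF_0(3)$ and $TMF_1(3)$ constructions (that it indeed trivializes the $Gal$-action on the supersingular locus on the nose), which should follow from the theory of the $K(2)$-localization of $TMF$ with level structure but requires care about descent along the cover of the supersingular locus.
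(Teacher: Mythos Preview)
Your proposal is correct and follows essentially the same route as the paper: both sides of the comparison are identified with $E_2^{h(C_2\times C_3)} \to E_2^{hC_3}$ (after passing to the maximal unramified extension), using Averett's formula on the $ER(2)$ side and a level-structure/automorphism analysis on the $TMF_0(3)$, $TMF_1(3)$ side, and the $TMF$-algebra map comes from the subgroup inclusion $C_2\times C_3 \subset \widehat{A}_4$. The paper packages this through the stack formalism $\mathcal{M}_X$ and torsor presentations of $\mathsf{Ell}_i(3)^{ss}_2$, but the content is the same; in particular, the paper resolves your ``main obstacle'' exactly as you suggest, by working with the explicit Weierstrass model $y^2+a_1xy+a_3y=x^3$ to see (Propositions~\ref{TMF13} and~\ref{TMF03}) that the stabilizer of the $\Gamma_1(3)$-point $(0,0)$ is the Teichm\"uller $C_3=\mathbb{F}_4^{\times}\subset\mathbb{S}_2$ and that the extra $C_2$ for $\Gamma_0(3)$ is precisely $[-1]_{C^{\wedge}}$, matching Averett's $C_2$.
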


It is worth pointing out that Behrens and Hopkins (\cite{BH}) have answered in great detail the following question: Given a {\it maximal} finite subgroup $G$ of $\mathbb{S}_n$ at the prime $p$, is the associated higher real $K$-theory $EO_n$ a summand of the $K(n)$-localization of a $TAF$ spectrum associated to a unitary similitude group of type $(1,n-1)$? This paper is concerned with a similar question where $p=2$, $n=2$ and $G$ is not maximal.

It is evident from the calculations of Mahowld and Rezk (\cite{MR}) that the spectrum $TMF_1(3)$ is an example of a {\it generalized} $E(2)$ (see \cite{Baker}). Presumably there is a notion of a generalized $E\mathbb{R}(n)$ so that $TMF_0(3)$ is an example of a generalized $ER(2)$.

In the next section we reformulate our main theorem  (as Theorem \ref{restate}) in algebraic-geometric language and review the various moduli spaces appearing in the formulation. The final section contains the proof of the main theorem.

{\bf Acknowledgements}. The author has benefited greatly from discussions with and comments received from Jack Morava, Niko Naumann and Andrew Salch.

\section{Setup}
\subsection{Stacks from ring spectra}
Let $BP$ denote the $p$-local Brown-Peterson spectrum. We can consider the associated flat Hopf algebroid. $$V= BP_*\simeq\mathbb{Z}_{(p)}[v_1,\ldots]$$ $$W = BP_*BP \simeq BP_*[t_1,\ldots]$$ Denote the stack associated to $(V,W)$ by $\mathcal{M}$. This is an algebraic stack (\cite[4.1]{LMB},\cite{Nau}). Let $BP^{\wedge\bullet +1}$ denote the usual cosimplicial ring spectrum, then $\mathcal{M}$ is the stack associated to the simplicial affine scheme $\hbox{Spec}\,\pi_*BP^{\wedge\bullet +1}$. $$\mathcal{M} = \hbox{colim}_{\Delta^{op}}\hbox{Spec}\,\pi_*BP^{\wedge\bullet +1}$$ The colimit is taken in the $2$-category of stacks \cite[12.4]{LMB}. 

The algebraic stack $\mathcal{M}$ is closely related to the moduli stack of one-dimensional commutative formal groups. A formal group over a scheme $S$ is a commutative group object in the category of formal schemes over $S$ that is {\it fpqc}-locally isomorphic to $(\widehat{\mathbb{A}}^1,0)$ as schemes. Let's denote this stack by $X_{1,fg}$. Given a ring $R$, the groupoid $X_{1,fg}(R)$ comprises formal groups $G/R$ over $\hbox{Spec}\,R$ and their isomophisms.

The stack $X_{1,fg}$ carries a canonical line bundle $\omega$. For every $R$ we can construct the locally free rank one $R$-module $\omega_{G/R}$ of invariant $1$-forms of $G$ over $\hbox{Spec}\,R$, and its formation is compatible with base change and therefore defines a line bundle $\omega$ over $X$. 

The stack $\mathcal{M}= \mathcal{M}_{(V,W)}$ is a $\mathbb{G}_m$ torsor over $X_{1,fg}\otimes\mathbb{Z}_{(p)}$. Its points can be described as follows: For a $\mathbb{Z}_{(p)}$-algebra $R$, the groupoid $\mathcal{M}(R)$ consists of pairs $(G/R, \alpha:\omega_{G/R} \simeq R)$ of a formal group and a trivialization of $\omega$ section, and isomorphisms of $G$ that respect the trivializations.

\begin{definition}

Let $X$ be a $p$-local homotopy commutative ring spectrum.

Consider the cosimplicial ring spectrum $BP^{\wedge\bullet +1}\wedge X$ and the associated simplicial affine scheme $\hbox{Spec}\,\pi_*(BP^{\wedge\bullet +1}\wedge X)$. Define $$\mathcal{M}_X = \hbox{colim}_{\Delta^{op}}\,\hbox{Spec}\,\pi_*(BP^{\wedge\bullet +1}\wedge X)$$ By definition, $\mathcal{M}_X$ is an algebraic stack over $\hbox{Spec}\,\mathbb{Z}_{(p)}$.
\end{definition}

We now make clear the relation of $\mathcal{M}_X$ with formal groups. The map of cosimplicial ring spectra $BP^{\wedge\bullet +1} \rightarrow BP^{\wedge\bullet +1} \wedge X$ gives a map of algebraic stacks $\mathcal{M}_X \rightarrow \mathcal{M}$. There is an algebraic stack $M_X$ over $X_{1,fg}\otimes\mathbb{Z}_{(p)}$ along with a line bundle $\omega_X: M_X \rightarrow B\mathbb{G}_m$, so that $\mathcal{M}_X$ is a $\mathbb{G}_m$-torsor over $M_X$. The $R$-points of $\mathcal{M}_X$ can then be identified with pairs $(P\in M_X(R), \alpha:\omega_X(P/R) \simeq R)$ of objects of $M_X$ over $\hbox{Spec}\,R$ and trivialzations of their $\omega_X$ sections.

If $X$ is a homotopy $BP$-algebra then $\mathcal{M}_X$ is an affine scheme.

%Given a ring spectrum $E$, the cosimplicial ring spectrum $E^{\wedge\bullet +1}\wedge X$ gives an atlas for $\mathcal{M}_X$ provided
%\begin{enumerate}
%\item $E^{\wedge k}\wedge X$ is a $BP$-module for every $k$,
%\item $E_*E$ is a flat $E_*$ module,
%\item $X^{\wedge}_E \simeq X_{(p)}$.
%\end{enumerate}

%In this case there is an equivalence of stacks $\hbox{Spec}\,\pi_*(E^{\wedge\bullet +1}\wedge X) \simeq \mathcal{M}_X$. Therefore certain choices of $E$ might give us simpler resolutions of $\mathcal{M}_X$.

\begin{lemma}\label{smash}
If the map $\mathcal{M}_X \rightarrow \mathcal{M}$ is flat and $Y$ is another $p$-local homotopy commutative ring spectrum, then the stack associated to the smash product product $X\wedge Y$ can be identified with the pullback. 
$$\mathcal{M}_{X\wedge Y} \simeq \mathcal{M}_X \times_{\mathcal{M}} \mathcal{M}_Y$$
\end{lemma}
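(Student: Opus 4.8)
The plan is to reduce the claimed equivalence of stacks to a K\"unneth isomorphism in $BP$-homology, the flatness hypothesis being exactly what is needed both to obtain that isomorphism and to pass from it to the statement about the colimits defining the stacks.

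First, recall that $BP_*BP$ is flat over $BP_*$ through either unit, so iterating the K\"unneth spectral sequence gives, for any spectrum $Z$, a natural isomorphism $\pi_*(BP^{\wedge n+1}\wedge Z)\cong\pi_*(BP^{\wedge n+1})\otimes_{BP_*}BP_*Z$. Applied to $Z=X$ this exhibits the cosimplicial ring spectrum $BP^{\wedge\bullet+1}\wedge X$, on homotopy, as the cobar complex of the $(BP_*,BP_*BP)$-comodule algebra $BP_*X$; consequently $\mathcal{M}_X$ is the relative affine scheme over $\mathcal{M}$ attached to the quasi-coherent $\mathcal{O}_{\mathcal{M}}$-algebra that corresponds to $BP_*X$ under the equivalence between quasi-coherent sheaves on $\mathcal{M}$ and $(BP_*,BP_*BP)$-comodules, and pulling $\mathcal{M}_X\to\mathcal{M}$ back along the atlas $\hbox{Spec}\,BP_*\to\mathcal{M}$ recovers $\hbox{Spec}\,BP_*X\to\hbox{Spec}\,BP_*$. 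In particular the hypothesis that $\mathcal{M}_X\to\mathcal{M}$ be flat is equivalent to $BP_*X$ being a flat $BP_*$-module, and this is the form in which I will use it.

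Now assume $BP_*X$ is flat. Then the K\"unneth map $BP_*X\otimes_{BP_*}BP_*Y\to BP_*(X\wedge Y)$ is an isomorphism of $BP_*BP$-comodule algebras. Under the dictionary of the previous paragraph this says that the quasi-coherent $\mathcal{O}_{\mathcal{M}}$-algebra attached to $X\wedge Y$ is the tensor product over $\mathcal{O}_{\mathcal{M}}$ of those attached to $X$ and to $Y$; since forming relative $\hbox{Spec}$ over $\mathcal{M}$ carries tensor products of quasi-coherent algebras to fiber products of schemes over $\mathcal{M}$, this yields exactly the asserted isomorphism $\mathcal{M}_{X\wedge Y}\simeq\mathcal{M}_X\times_{\mathcal{M}}\mathcal{M}_Y$. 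One may also phrase the conclusion through descent: both sides pull back along the faithfully flat atlas $\hbox{Spec}\,BP_*\to\mathcal{M}$ to $\hbox{Spec}\,BP_*(X\wedge Y)$, carrying the same descent datum over $\hbox{Spec}\,BP_*BP$, hence they agree by fppf descent.

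The K\"unneth input is routine; the genuinely delicate point, and the place where flatness is indispensable, is the identification of $\mathcal{M}_X$ with a relative affine scheme over $\mathcal{M}$, i.e. the assertion that the colimit over $\Delta^{op}$ of $\hbox{Spec}\,\pi_*(BP^{\wedge\bullet+1}\wedge X)$ is computed by that relative $\hbox{Spec}$. Realization of simplicial schemes into stacks does not commute with fiber products in general, so one cannot simply take colimits of the degreewise identity $\hbox{Spec}\,\pi_*(BP^{\wedge n+1}\wedge X\wedge Y)\cong\hbox{Spec}\,\pi_*(BP^{\wedge n+1}\wedge X)\times_{\hbox{Spec}\,\pi_*(BP^{\wedge n+1})}\hbox{Spec}\,\pi_*(BP^{\wedge n+1}\wedge Y)$; it is flatness of $BP_*X$ that lets one recognize the cobar complex as the \v{C}ech nerve of a flat affine cover, for which the colimit is the expected quotient and is stable under base change along $\mathcal{M}$. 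The only remaining care is bookkeeping with the left- versus right-unit $BP_*$-module structures when the iterated smash products are rewritten.
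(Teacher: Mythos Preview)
Your argument is correct, and takes a somewhat different route from the paper's. The paper first treats the case where $X$ and $Y$ are themselves $BP$-algebras (so $\mathcal{M}_X$, $\mathcal{M}_Y$ are already affine), invoking Landweber exactness to compute $E_*F$; it then passes to general $X,Y$ by a bisimplicial descent argument, writing $\mathcal{M}_X\times_{\mathcal{M}}\mathcal{M}_Y$ as the colimit of the bisimplicial affine scheme $\hbox{Spec}\,\pi_*(BP^{\wedge\bullet+1}\wedge X)\times_{\mathcal{M}}\hbox{Spec}\,\pi_*(BP^{\wedge\bullet+1}\wedge Y)$ and identifying that with $\hbox{Spec}\,\pi_*(BP^{\wedge\bullet+1}\wedge X\wedge BP^{\wedge\bullet+1}\wedge Y)$. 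You instead short-circuit the bisimplicial step by observing at the outset that $\mathcal{M}_X$ is \emph{always} relative affine over $\mathcal{M}$, namely $\hbox{Spec}_{\mathcal{M}}$ of the quasi-coherent algebra associated to the comodule algebra $BP_*X$, so that the whole statement reduces to the single K\"unneth isomorphism $BP_*X\otimes_{BP_*}BP_*Y\cong BP_*(X\wedge Y)$ of comodule algebras. This is cleaner and more conceptual; the paper's argument is more hands-on with the simplicial presentations but amounts to the same input.

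One small point: your final paragraph overstates where flatness enters. The identification of $\mathcal{M}_X$ with $\hbox{Spec}_{\mathcal{M}}(\widetilde{BP_*X})$ holds for any $X$ --- the cobar resolution is the \v{C}ech nerve of $\hbox{Spec}\,BP_*X\to\hbox{Spec}_{\mathcal{M}}(\widetilde{BP_*X})$ regardless, since $\hbox{Spec}\,BP_*\to\mathcal{M}$ is already a faithfully flat cover. Flatness of $BP_*X$ is needed only for the K\"unneth isomorphism itself (and, if you prefer the descent phrasing, to ensure that pullback along $\mathcal{M}_X\to\mathcal{M}$ commutes with the colimit defining $\mathcal{M}_Y$). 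Your paragraph~2 already had this right; paragraph~4 muddies it a bit.
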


\begin{proof}
Consider the affine case first. Suppose $E$ and $F$ are $BP$-algebras and the map $\mathcal{M}_E \rightarrow \mathcal{M}$ is flat. This means that $E$ is a Landweber exact cohomology theory and there is an isomophism $E_*F \simeq E_*\otimes_{BP_*} BP_*BP \otimes_{BP_*}F_*.$ In terms of stacks this is,
$$\hbox{Spec}\,E_*F \simeq \hbox{Spec}\,E_* \times_{\mathcal{M}} \hbox{Spec}\,F_*. $$

The rest of the argument follows via descent. Suppose $\mathcal{X}$ and $\mathcal{Y}$ are algebraic stacks with maps to $\mathcal{M}$. Let $\mathcal{V}_{\bullet}$ and $\mathcal{W}_{\bullet}$ be simplicial affine schemes such that $\mathcal{X} = \hbox{colim}_{\Delta^{op}}\, \mathcal{V}_{\bullet}$ and $\mathcal{Y} \simeq \hbox{colim}_{\Delta^{op}}\,\mathcal{W}_{\bullet}$. Then the pullback can be given an atlas by the bisimplicial affine (since $\mathcal{M}$ is algebraic) scheme $\mathcal{X}\times_{\mathcal{M}}\mathcal{Y} \simeq \hbox{colim}_{\Delta^{op}\times \Delta^{op}}\, \mathcal{V}_{\bullet} \times_{\mathcal{M}}\mathcal{W}_{\bullet}$. 

If $\mathcal{M}_X \rightarrow \mathcal{X}$ is flat, it follows from the proof in the affine case that there is an equivalence of bisimplicial affine schemes $\hbox{Spec}\,\pi_*\,BP^{\wedge\bullet+1}\wedge X\times_{\mathcal{M}} \hbox{Spec}\,\pi_*\,BP^{\wedge\bullet+1}\wedge Y \simeq \hbox{Spec}\,\pi_*\,BP^{\wedge\bullet+1}\wedge X\wedge BP^{\wedge\bullet+1}\wedge Y$. The totalization of the bicosimplicial ring spectrum $BP^{\wedge\bullet+1}\wedge X\wedge BP^{\wedge\bullet+1}\wedge Y$ is $X\wedge Y$.

\end{proof}

The notion of height of a formal group gives a filtration of the moduli stack $X_{1,fg}$ and this canonically lifts to a filtration of the $\mathbb{G}_m$-torsor $\mathcal{M}$. One can give an explicit construction of the filtration. Let $I_n= (p,v_1,\ldots,v_{n-1})$ denote the invariant prime ideals of $V$. The associated substacks correspond to formal groups of height at least $n$.
$$\mathcal{M}^{\geq n} = \hbox{Spec}\,(V/I_n, V/I_n\otimes_V W \otimes_V V/I_n)$$

There is a filtration on $\mathcal{M}$ by closed substacks
$$\mathcal{M}= \mathcal{M}^{\geq 0} \supseteq \mathcal{M}^{\geq 1} \supseteq \ldots \supseteq\mathcal{M}^{\geq \infty}.$$

Let $\mathcal{U}^{n} = \mathcal{M} - \mathcal{M}^{\geq n}\,\,(0\leq n \leq \infty)$ be the open substack of $\mathcal{M}$ complementary to $\mathcal{M}^{\leq n}$. There is an ascending chain on open immersions $$\emptyset =\mathcal{U}^0 \subseteq \mathcal{U}^1 \subseteq \ldots \subseteq \mathcal{U}^{\infty} \subseteq \mathcal{M}.$$ Since for $0\leq n < \infty$, $I_n$ is finitely generated, the open immersion $\mathcal{U}^n \subseteq \mathcal{M}$ is quasi-compact and $\mathcal{U}^n$ is an algebraic stack. The $R$-points of $\mathcal{U}^n$ consist of formal groups of height at most $n-1$ over $\hbox{Spec}\,R$ along with trivializations of their corresponding cotangent bundles as before. The following corollary to \cite[Theorem 26]{Nau} gives an explicit atlas for $\mathcal{U}^{n+1}$.

\begin{prop}\label{En}
 Let $E(n)$ be the n-th Johnson Wilson spectrum with $E(n)_* = \mathbb{Z}_{(p)}[v_1,\ldots,v_n^{\pm 1}]$. Let $$(V_n,W_n):=(E(n)_*, E(n)_*\otimes_{BP_*} BP_*BP \otimes _{BP_*} E(n)_*)$$ be the Hopf algebroid induced from $(BP_*,BP_*BP)$ by the Landweber exact map $BP_*\rightarrow E(n)_*$. Then the Hopf algebroid $(V_n,W_n)$ is flat and its associated algebraic stack is $\mathcal{U}^{n+1}$. Equivalently, $$\hbox{colim}_{\Delta^{op}}\hbox{Spec}\,\pi_*E(n)^{\wedge\bullet +1} \simeq \mathcal{U}^{n+1}.$$
\end{prop}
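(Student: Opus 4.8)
The plan is to recognize $\hbox{Spec}\,V_n=\hbox{Spec}\,E(n)_*$, equipped with its formal group law and the trivialization of $\omega$ coming from the canonical coordinate, as an fpqc atlas of the open substack $\mathcal{U}^{n+1}\subseteq\mathcal{M}$, and then to read off both desired descriptions of this stack from descent.

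First I would dispose of flatness. Since $v_n$ is inverted, the Landweber criterion for $BP_*\to E(n)_*$ holds: multiplication by $v_h$ on $E(n)_*/(p,v_1,\ldots,v_{h-1})$ is injective for every $h\geq 0$ --- vacuously for $h>n$, since that quotient vanishes --- so $E(n)$ is Landweber exact. By \cite[Theorem 26]{Nau} this is precisely the assertion that the classifying map $g\colon\hbox{Spec}\,V_n\to\mathcal{M}$ of the formal group law over $V_n$ (with its canonical trivialization of $\omega$) is flat; equivalently the right unit $\eta_R\colon V_n\to W_n=V_n\otimes_{BP_*}BP_*BP\otimes_{BP_*}V_n$ is flat, i.e. $(V_n,W_n)$ is a flat Hopf algebroid, and by the Landweber exact functor theorem $W_n=E(n)_*E(n)$.

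Next --- and this is the step that needs care --- I would identify the image of $g$ with $\mathcal{U}^{n+1}$. Since $v_n\in V_n$ is a unit, the invariant ideal $I_{n+1}=(p,v_1,\ldots,v_n)$ is the unit ideal in $V_n$, so $g$ factors through $\mathcal{M}\smallsetminus\mathcal{M}^{\geq n+1}=\mathcal{U}^{n+1}$; and $g$ hits every point of $\mathcal{U}^{n+1}$ --- the characteristic-zero generic point since $V_n\otimes\mathbb{Q}\neq 0$, and, for each $1\leq h\leq n$, the height-$h$ point, reached over $V_n/I_h=\mathbb{F}_p[v_h,\ldots,v_{n-1},v_n^{\pm1}]\neq 0$. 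As the underlying space of $\mathcal{U}^{n+1}$ is exactly this finite chain of height points, $g$ is surjective, and being in addition flat and quasi-compact (its source is affine) it is an fpqc atlas of $\mathcal{U}^{n+1}$. The delicate input --- that a flat, quasi-compact surjection onto the open substack really is an atlas, notwithstanding that $V_n$ is not of finite presentation over $BP_*$ and $\mathcal{M}$ is far from noetherian --- is exactly what \cite[Theorem 26]{Nau} is invoked to supply.

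Granting this, descent finishes the argument. Because $\mathcal{U}^{n+1}\hookrightarrow\mathcal{M}$ is an open immersion (hence a monomorphism), fibre products over $\mathcal{U}^{n+1}$ coincide with those over $\mathcal{M}$, so the \v{C}ech nerve of $g$ is the cosimplicial affine scheme $\hbox{Spec}\,(V_n\otimes_{BP_*}BP_*BP\otimes_{BP_*}\cdots\otimes_{BP_*}V_n)=\hbox{Spec}\,W_n^{\otimes_{V_n}\bullet}$, the cobar complex of $(V_n,W_n)$; its colimit in stacks is therefore at once $\mathcal{M}_{(V_n,W_n)}$ and $\mathcal{U}^{n+1}$, which is the first assertion. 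For the ``equivalently'' clause, flatness of $(V_n,W_n)$ removes all K\"unneth obstructions, so $\pi_*E(n)^{\wedge k+1}\cong W_n^{\otimes_{V_n}k}$ compatibly with the cosimplicial maps, whence $\hbox{colim}_{\Delta^{op}}\hbox{Spec}\,\pi_*E(n)^{\wedge\bullet+1}\simeq\mathcal{M}_{(V_n,W_n)}\simeq\mathcal{U}^{n+1}$.
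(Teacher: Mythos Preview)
Your proposal is correct and follows exactly the approach the paper takes: the paper states this proposition as a direct corollary to \cite[Theorem~26]{Nau} without further argument, and what you have written is simply a careful unpacking of why Naumann's result yields the claim --- verifying Landweber exactness, identifying the image of $\hbox{Spec}\,V_n\to\mathcal{M}$ with $\mathcal{U}^{n+1}$ by checking the height strata, and then reading off the presentation via \v{C}ech descent.
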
 

Lemma \ref{smash} gives us the following representation of the pullback.
$$\hbox{colim}_{\Delta^{op}}\hbox{Spec}\,\pi_*(E(n)^{\wedge\bullet +1}\wedge X) \simeq \mathcal{U}^{n+1} \times_{\mathcal{M}} \mathcal{M}_X$$

Let $\mathcal{M}^n = \mathcal{M}^{\geq n}[v_n^{-1}]$ denote the height $n$-layer. The stack $\mathcal{M}^n$ is contained inside $\mathcal{U}^{n+1}$ as a closed substack. Define the formal neighborhood of $\mathcal{M}^n$ by taking the completion of $\mathcal{U}^{n+1}$ at $\mathcal{M}^n$.

\begin{definition}
 $\widehat{\mathcal{M}}^n = (\mathcal{U}^{n+1})^{\wedge}_{I_{n}}$ 
\end{definition}

Let $$\eta:\hbox{Spec}\,\mathbb{F}_{p^n}[u^{\pm 1}] \rightarrow \mathcal{M}^n$$ be the Honda formal group of height $n$. This is a presentation for $\mathcal{M}^n$ and a pro-\'etale $\mathbb{S}_n \rtimes \hbox{Gal}(\mathbb{F}_{p^n}/\mathbb{F}_p)$-torsor. Let $\hbox{Def}(\Gamma_n, \mathbb{F}_{p^n})$ be the Lubin-Tate space of deformations of the Honda formal group $\Gamma_n$ over $\mathbb{F}_{p^n}$. Let $\hbox{Def}(\Gamma_n,\mathbb{F}_{p^n})[u^{\pm 1}]:= \hbox{Def}(\Gamma,\mathbb{F}_{p^n})\times \hbox{Spec}\,\mathbb{Z}[u^{\pm 1}]$. There is a map $$\hbox{Def}(\Gamma_n, \mathbb{F}_{p^n})[u^{\pm 1}] \rightarrow \widehat{\mathcal{M}}^n$$ which is a presentation and an pro-\'etale $\mathbb{S}_n \rtimes \hbox{Gal}(\mathbb{F}_{p^n}/\mathbb{F}_p)$-torsor. Furthermore there is a pullback of stacks,

$$\xymatrix{
\hbox{Spec}\,\mathbb{F}_{p^n}[u^{\pm 1}] \ar[r] \ar[d] &\hbox{Def}(\Gamma_n, \mathbb{F}_{p^n})[u^{\pm 1}] \ar[d] \\
\mathcal{M}^n \ar[r] &\widehat{\mathcal{M}}^n
}$$

Let $E_n$ denote the Hopkins-Miller spectrum associated to $\hbox{Def}(\Gamma_n,\mathbb{F}_{p^n})$. There is an isomorphism $\hbox{Def}(\Gamma_n,\mathbb{F}_{p^n})[u^{\pm 1}] \simeq \hbox{Spf}\,\pi_*E_n$. The diagonal, 

\begin{eqnarray*}
\hbox{Def}(\Gamma_n,\mathbb{F}_{p^n})\times_{\widehat{\mathcal{M}}^n} \hbox{Def}(\Gamma_n,\mathbb{F}_{p^n})[u^{\pm 1}] &\simeq& \hbox{Spf}\,E_{n*} \times (\mathbb{S}_n \rtimes Gal(\mathbb{F}_{p^n}/\mathbb{F}_p))\\ &\simeq& \hbox{Spf}\,\pi_*L_{K(n)}(E_n\wedge E_n).
\end{eqnarray*}

This produces an atlas for $\widehat{\mathcal{M}}^n$. 
\begin{prop}
$\hbox{colim}_{\Delta^{op}}\hbox{Spf}\,\pi_*L_{K(n)}E_n^{\wedge\bullet +1} \simeq \widehat{\mathcal{M}}^n$
\end{prop}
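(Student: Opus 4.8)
The plan is to realize $\widehat{\mathcal{M}}^n$ as the colimit of the \v{C}ech nerve of the presentation $p:\hbox{Spf}\,\pi_*E_n \to \widehat{\mathcal{M}}^n$ constructed above, and then to identify that nerve, degreewise and compatibly with its simplicial structure, with the simplicial formal affine scheme $\hbox{Spf}\,\pi_*L_{K(n)}E_n^{\wedge\bullet+1}$. By the discussion preceding the statement, $p$ is a pro-\'etale $G$-torsor with $G=\mathbb{S}_n\rtimes\hbox{Gal}(\mathbb{F}_{p^n}/\mathbb{F}_p)$, so in particular it is flat and an epimorphism. Hence, in the $2$-category of formal (ind-algebraic) stacks in which $\widehat{\mathcal{M}}^n=(\mathcal{U}^{n+1})^{\wedge}_{I_n}$ lives, descent gives $\widehat{\mathcal{M}}^n\simeq\hbox{colim}_{\Delta^{op}}\check{C}_\bullet(p)$, where $\check{C}_k(p)$ is the $(k+1)$-fold fiber product of $\hbox{Spf}\,\pi_*E_n$ over $\widehat{\mathcal{M}}^n$. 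Because $p$ is a torsor, this nerve is the usual bar construction $\check{C}_k(p)\simeq\hbox{Spf}\,\pi_*E_n\times G^k$ (with $G$ regarded as a profinite set), the faces and degeneracies being projections, the $G$-action, multiplication in $G$, and the unit.

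Next I would identify $\check{C}_k(p)$ with $\hbox{Spf}\,\pi_*L_{K(n)}E_n^{\wedge k+1}$. For $k=0$ this is the defining isomorphism $\hbox{Def}(\Gamma_n,\mathbb{F}_{p^n})[u^{\pm1}]\simeq\hbox{Spf}\,\pi_*E_n$; for $k=1$ it is precisely the ``diagonal'' computation recorded in the excerpt, $\hbox{Spf}\,\pi_*E_n\times G\simeq\hbox{Spf}\,\pi_*L_{K(n)}(E_n\wedge E_n)$, which is the identification of the Morava module $E_{n*}^{\vee}E_n$ with $\hbox{Map}^c(G,E_{n*})$ (Hovey--Strickland, Devinatz--Hopkins). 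For general $k\geq 1$ I would argue inductively: since $p$ is flat, $\check{C}_k(p)\simeq\check{C}_{k-1}(p)\times_{\widehat{\mathcal{M}}^n}\hbox{Spf}\,\pi_*E_n$, and the $K(n)$-local K\"unneth isomorphism $\pi_*L_{K(n)}(E_n^{\wedge k}\wedge E_n)\simeq\pi_*L_{K(n)}(E_n^{\wedge k})\,\widehat{\otimes}_{\pi_*E_n}\,\pi_*L_{K(n)}(E_n\wedge E_n)$ --- the $I_n$-completed analogue of Lemma~\ref{smash}, which holds because $E_n$ is Landweber exact and the relevant homotopy groups are flat over $\pi_*E_n$ and concentrated in even degrees --- identifies the ring of functions on this fiber product with $\pi_*L_{K(n)}E_n^{\wedge k+1}\simeq\hbox{Map}^c(G^k,E_{n*})$. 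These identifications are manifestly compatible with the cosimplicial ring structure on $E_n^{\wedge\bullet+1}$ on one side and the bar-construction faces and degeneracies on the other. Combining the two steps yields $\hbox{colim}_{\Delta^{op}}\hbox{Spf}\,\pi_*L_{K(n)}E_n^{\wedge\bullet+1}\simeq\hbox{colim}_{\Delta^{op}}\check{C}_\bullet(p)\simeq\widehat{\mathcal{M}}^n$.

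The main obstacle is the completed K\"unneth step: one must know that $L_{K(n)}$-localization (equivalently here $I_n$-adic completion) of the iterated smash product genuinely computes the iterated fiber product of formal schemes, which requires the flatness and even-concentration of $\pi_*L_{K(n)}E_n^{\wedge k}$ over $\pi_*E_n$ and the fact that $I_n$-completion commutes with the finite fiber products in play and with the passage from $\hbox{Spec}$ to $\hbox{Spf}$; equivalently, one must use that $p$ is honestly pro-\'etale so that its nerve really is the profinite bar construction $\hbox{Spf}\,\pi_*E_n\times G^\bullet$. The uncompleted version of this bookkeeping is exactly what Naumann's Theorem~26 supplies in Proposition~\ref{En}; the new ingredient here is carrying it through the completion at $I_n$.
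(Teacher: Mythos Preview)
Your proposal is correct and follows essentially the same approach as the paper. The paper does not give a separate proof of this proposition: it regards the statement as an immediate consequence of the preceding discussion, namely that $\hbox{Spf}\,\pi_*E_n\to\widehat{\mathcal{M}}^n$ is a pro-\'etale $\mathbb{S}_n\rtimes Gal$-torsor and that the diagonal is $\hbox{Spf}\,\pi_*L_{K(n)}(E_n\wedge E_n)$; your write-up simply fills in the higher \v{C}ech levels and the compatibility with the cosimplicial structure that the paper leaves implicit.
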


Lemma 2.1 gives us the following representation of the pullback.
$$\hbox{colim}_{\Delta^{op}}\hbox{Spf}\,\pi_*\,L_{K(n)}(E_n^{\wedge\bullet +1}\wedge X) \simeq \widehat{\mathcal{M}}^n \times_{\mathcal{M}} \mathcal{M}_X$$

For $X$ that is $E(n)$-local, the stack $\mathcal{M}_X$ can be assembled from the pieces $\widehat{\mathcal{M}}^k\times_{\mathcal{M}} \mathcal{M}_X$ for $0\leq k \leq n$. This follows from the observation that 
 
$$\xymatrix{
\widehat{\mathcal{M}}^n\times_{\mathcal{M}} \mathcal{U}^n \ar[r] \ar[d] &\mathcal{U}^n \ar[d]\\
\widehat{\mathcal{M}}^n  \ar[r] &\mathcal{U}^{n+1}
}$$ is a pushout square of stacks (see \cite[Theorem C]{Rydh} for reference on stack pushouts) and $\mathcal{M}_X \simeq \mathcal{U}^{n+1} \times_{\mathcal{M}} \mathcal{M}_X$ if $X$ is $E(n)$-local.

The remainder of this paper concerned with working out the structure of $\mathcal{M}_{ER(2)}$ near the height $2$ point.

\subsection{Elliptic curves} 

A Weierstrass curve over $R$ is the closure in $\mathbb{P}^2_R$ of the affine curve 
\begin{equation}\label{Weierstrass}
y^2 + a_1xy + a_3y = x^3 + a_2x^2 + a_4x + a_6
\end{equation}
over $R$. The curve is smooth if and only if $\Delta = \Delta(a_1,\ldots,a_6)$ is invertible in $R$. A strict isomorphism of Weierstrass curves in given by the change of coordinates $$x'= x+r,\,\, y' = y + sx +t.$$ The Weierstrass curves along with their coordinate changes form an algebraic stack $\mathcal{M}_{(A,\Gamma)}$ determined by the Hopf algebroid $(A, \Gamma)$ where $$A = \mathbb{Z}[a_1,a_2,a_3,a_4,a_6], \,\,\Gamma = A[s,r,t]$$ The Hopf algebroid structure maps are implicit in the definitions. (see \cite[section 3]{HM}).

We now explain how this stack $\mathcal{M}_{(A,\Gamma)}$ is associated to elliptic curves. Let $Ell$ denote the moduli stack of elliptic curves over $\hbox{Spec}\,\mathbb{Z}$.  A morphism $\hbox{Spec}(R) \rightarrow Ell$ classifies an elliptic curve $C \rightarrow R$, which is a smooth proper morphism whose geometric fibers are elliptic curves. Let $\overline{Ell}$ denote the compactified moduli stack classifying generalized elliptic curves. There exists a line bundle $\omega \rightarrow \overline{Ell}$ associated to the cotangent space at the identity section of a generalized elliptic curve. Given a smooth elliptic curve $C:\hbox{Spec}\,R \rightarrow Ell$, the set of sections $\Gamma(\hbox{Spec}\,R, \omega(C))$ is the set of invariant $1$-forms on $C$.

Let $\mathsf{Ell}$ be the $\mathbb{G}_m$-torsor over $Ell$ whose $R$-points are given by pairs $(C/R, \alpha: \omega(C) \simeq R)$. Here $C/R$ is an elliptic curve over $R$ and $\alpha$ is a choice of trivialization of the $R$-sections of $\omega$. 

Any generalized elliptic curve $C\rightarrow S$ admits a presentation in the Weierstrass normal form locally over $S$ in the flat topology. The identity element of the elliptic curve is identified with the unique point at infinity of the Weierstrass curve. This gives a map of stacks $\overline{\mathsf{Ell}} \rightarrow \mathcal{M}_{(A,\Gamma)}$ which is an equivalence on the substack of smooth elliptic curves, $\mathsf{Ell} \simeq \mathcal{M}_{(A[\Delta ^{-1}], \Gamma[\Delta^{-1}])}$.

There is a natural map $Ell \rightarrow X_{1,fg}$ classifying the formal group associated to an elliptic curve. This map lifts canonically to a map of the $\mathbb{G}_m$-torsors. $$\mathsf{Ell} \rightarrow \mathcal{M}$$

Consider the substack $\overline{\mathsf{Ell}}_p \rightarrow \overline{\mathsf{Ell}}$ which is the $p$-completion of $\overline{\mathsf{Ell}}$. Note that $\overline{\mathsf{Ell}}_p$ is a {\it formal} Deligne-Mumford stack. For any $p$-complete ring $R$ the map $\hbox{Spf}(R) \rightarrow \overline{\mathsf{Ell}}$ classifies an ind-system $C_m/\hbox{Spec}(R/p^n)$ of generalized elliptic curves.

Define $(\overline{\mathsf{Ell}})_{\mathbb{F}_p} = \overline{\mathsf{Ell}}\times_{\mathbb{Z}} \hbox{Spec}(\mathbb{F}_p)$. Let $(\mathsf{Ell}^{ord})_{\mathbb{F}_p} \subset (\overline{\mathsf{Ell}})_{\mathbb{F}_p}$ denote the locus of ordinary generalized elliptic curves in characteristic $p$, and let 
$$(\mathsf{Ell}^{ss})_{\mathbb{F}_p} = (\overline{\mathsf{Ell}})_{\mathbb{F}_p} - (\mathsf{Ell}^{ord})_{\mathbb{F}_p}$$ denote the locus of supersingular elliptic curves in characteristic $p$.

Consider the substack $$\mathsf{Ell}^{ss}_p \subset \overline{\mathsf{Ell}}_p$$ where $\mathsf{Ell}^{ss}_p$ is the completion of $\overline{\mathsf{Ell}}$ at $(\mathsf{Ell}^{ss})_{\mathbb{F}_p}$. 

Define the Hopf algebroid,

$$A'= A[\Delta^{-1}]^{\wedge}_{(p,a_1)},\,\, \Gamma'=A'\otimes_A\Gamma\otimes_A A'.$$

Then,

$$\mathsf{Ell}^{ss}_p = \mathcal{M}_{(A',\Gamma')}.$$

The following is a special case of the Serre-Tate theorem for abelian schemes \cite[Theorem 1.2.1]{Katz}. 

\begin{theorem}(Serre,Tate)
Let $R$ be a Noetherian ring with $p$ nilpotent and $I$ a nilpotent ideal in $R$, $R_0= R/I$. Let $Ell_p^{ss}(R)$ denote the category of supersingular elliptic curves in characteristic $p$ over $R$ and $D(R,R_0)$ denote the category of triples $$(C_0,G, \epsilon)$$ consisting of a supersingular elliptic curve $C_0$ over $R_0$, a formal group $G$ over $R$ and an isomorphism of formal groups over $R_0$, $\epsilon: G_0 \simeq C_0^{\wedge}$, where $G_0$ is reduction modulo $I$ of $G$. Then the functor $$Ell_p^{ss}(R) \rightarrow D(R,R_0)$$ $$ C \mapsto (C_0,C^{\wedge}, \hbox{natural}\,\,\,\epsilon)$$ is an equivalence of categories.

\end{theorem}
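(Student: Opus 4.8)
The statement is the Serre--Tate theorem specialized to elliptic curves, and the plan is to reduce it to the comparison between deforming an abelian scheme and deforming its associated $p$-divisible group, using supersingularity to identify the latter with the formal completion. First I would check that for an elliptic curve $C/R$ with supersingular reduction the $p$-divisible group $C[p^{\infty}]$ is connected: its special fibre $C_0[p^{\infty}]$ has trivial \'etale quotient because $C_0$ is supersingular, and since the \'etale quotient of $C[p^{\infty}]$ is finite \'etale and lifts uniquely from $R_0$ to $R$ (as $I$ is nilpotent), it is already trivial over $R$. Hence $C[p^{\infty}] = C^{\wedge}$ canonically, a one--dimensional formal group of height $2$, and the functor $C \mapsto (C_0, C^{\wedge}, \mathrm{can})$ is literally the Serre--Tate functor $C \mapsto (C_0, C[p^{\infty}], \mathrm{can})$ with $D(R,R_0)$ rewritten as triples $(C_0, G, \epsilon)$ where $G$ is a $p$-divisible group lifting $C_0[p^{\infty}]$.

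For faithfulness I would use the rigidity of homomorphisms of abelian schemes along nilpotent thickenings (the Hom functor is formally unramified over the base): a homomorphism $C \to C'$ of elliptic curves over $R$ that restricts to the zero homomorphism over $R_0$ is itself zero. For fullness the essential input is Drinfeld's rigidity lemma for $p$-divisible groups: if $G, H$ are $p$-divisible groups over $R$ with $p$ and $I$ nilpotent, then $\mathrm{Hom}_R(G,H) \to \mathrm{Hom}_{R_0}(G_0,H_0)$ is injective, and for large $N$ every $f_0$ has $p^N f_0$ in the image. Given a morphism in $D(R,R_0)$ between triples attached to $C$ and $C'$ --- that is, a compatible pair consisting of $C_0 \to C_0'$ over $R_0$ and $C^{\wedge} \to C'^{\wedge}$ over $R$ --- I would apply this lemma to lift $p^N$ times the map, then divide by $p^N$ on $p$-divisible groups and use abelian-scheme rigidity again to descend to a genuine homomorphism $C \to C'$ over $R$ realizing the pair.

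The heart of the proof is essential surjectivity. Since $R$ is Noetherian, $I$ is finitely generated, so $I^{N}=0$ and the tower $R \to R/I^{N-1} \to \cdots \to R/I^2 \to R_0$ expresses $R \to R_0$ as a composite of square-zero extensions; a triple over $R$ restricts compatibly down this tower, so it suffices to lift across one square-zero extension $R \to R_0$ with kernel $I$, $I^2 = 0$. For such a thickening I would invoke crystalline Dieudonn\'e theory: by Grothendieck--Messing together with the Mazur--Messing comparison $H^1_{\mathrm{cris}}(C_0) \simeq \mathbb{D}(C_0[p^{\infty}])$, both the deformations of $C_0$ over $R$ and the deformations of $C_0[p^{\infty}]$ over $R$ are classified by lifts of the same Hodge filtration inside the evaluation of the common Dieudonn\'e crystal on $R$, compatibly with $C \mapsto C[p^{\infty}]$; in particular the triple $(C_0, G, \epsilon)$ determines such a filtration and hence an elliptic curve $C/R$ with $C^{\wedge} \simeq G$. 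Alternatively one can avoid crystalline machinery: over a square-zero extension the lifts of $C_0$ form a torsor under $H^{1}(C_0,\mathcal{O}_{C_0}) \otimes_{R_0} (\mathrm{Lie}\,C_0 \otimes_{R_0} I)$, which is unobstructed because $H^{2}(C_0,\mathcal{O}_{C_0})=0$, the lifts of the formal group form a torsor under the same group, and one checks that the comparison map matches the two torsor structures.

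The step I expect to be the main obstacle is the control of morphisms, i.e.\ extracting from Drinfeld's rigidity lemma a precise enough description of the functor on Hom-sets to conclude fullness, together with the deformation-theoretic comparison in the square-zero case --- concretely, the clean identification of the crystalline/Dieudonn\'e data of the elliptic curve with that of its formal group, which is exactly the reflection of the fact that for a supersingular curve nothing infinitesimal is lost on passing to $C^{\wedge}$. The nilpotence d\'evissage and the torsor computations are routine, and I would carry them out only far enough to see that the two obstruction groups literally coincide.
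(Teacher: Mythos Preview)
Your sketch is a reasonable outline of the standard proof of the Serre--Tate theorem for elliptic curves, following the Drinfeld rigidity lemma for full faithfulness and Grothendieck--Messing (or the direct torsor comparison) for essential surjectivity across square-zero thickenings. However, the paper does not prove this theorem at all: it is quoted as a special case of the Serre--Tate theorem for abelian schemes, with a reference to Katz's \emph{Serre--Tate local moduli} \cite[Theorem 1.2.1]{Katz}, and is then used as a black box to identify $\mathsf{Ell}^{ss}_p$ with $\widehat{\mathcal{M}}^2 \times_{\mathcal{M}} \overline{\mathsf{Ell}}$ and to match deformation spaces.

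So there is nothing to compare against. If you want to include a proof, what you have written is essentially the argument in Katz's notes (which in turn follows Drinfeld): the key technical inputs are exactly Drinfeld's lemma that $p^N \cdot \mathrm{Hom}_{R_0}(G_0,H_0)$ lifts to $\mathrm{Hom}_R(G,H)$, and the crystalline comparison identifying liftings of an abelian scheme with liftings of its $p$-divisible group through a nilpotent divided-power thickening. Your observation that supersingularity forces $C[p^\infty]$ to be connected, hence equal to $C^{\wedge}$, is what makes the formal-group formulation in the paper equivalent to the usual $p$-divisible-group formulation. The only point I would tighten is the fullness step: once Drinfeld gives you a lift of $p^N f_0$ on $p$-divisible groups, you still need to go back to a morphism of abelian schemes, and this is where one uses that $\mathrm{Hom}(C,C') \to \mathrm{Hom}(C[p^\infty], C'[p^\infty])$ is injective with $p$-torsion-free cokernel (again part of the Drinfeld--Katz package), not just rigidity of abelian-scheme homomorphisms.
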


The following is implied by the Serre-Tate theorem.

\begin{prop}
There is an equivalence of formal Deligne-Mumford stacks,
$$\mathsf{Ell}^{ss}_p \simeq \widehat{\mathcal{M}}^2 \times_{\mathcal{M}} \overline{\mathsf{Ell}}.$$
\end{prop}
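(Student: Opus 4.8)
The plan is to exhibit $\widehat{\mathcal{M}}^2 \times_{\mathcal{M}} \overline{\mathsf{Ell}}$ directly as the formal completion of $\overline{\mathsf{Ell}}$ along the supersingular locus, so that it agrees with $\mathsf{Ell}^{ss}_p$ essentially by the definition of the latter. The only geometric input is the identification of the locus where the formal group has height $\ge 2$ with the supersingular locus, which is the incarnation of the Serre--Tate theorem in this context; everything else is formal manipulation with completions.

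First I would observe that every generalized elliptic curve classified by $\overline{\mathsf{Ell}}$ has formal group of height at most $2$: it is $\widehat{\mathbb{G}}_m$ along a nodal fibre, of height $1$ or $2$ along a smooth fibre in characteristic $p$, and of height $0$ in characteristic $0$ (cuspidal cubics, which would carry $\widehat{\mathbb{G}}_a$, are excluded from $\overline{\mathsf{Ell}}$). Hence the classifying map $\overline{\mathsf{Ell}}_{(p)} \to \mathcal{M}$ factors through the quasi-compact open substack $\mathcal{U}^3 = \mathcal{M} - \mathcal{M}^{\ge 3}$. Now $\widehat{\mathcal{M}}^2 = (\mathcal{U}^3)^{\wedge}_{I_2}$ is, by definition, the filtered colimit of the finite infinitesimal thickenings of $\mathcal{M}^2$ inside $\mathcal{U}^3$ (the vanishing locus of $I_2 = (p,v_1)$, on which $v_2$ is automatically invertible), and finite limits commute with filtered colimits in the $2$-category of stacks; pulling back along $\overline{\mathsf{Ell}}_{(p)} \to \mathcal{U}^3$ therefore gives
\[
  \widehat{\mathcal{M}}^2 \times_{\mathcal{M}} \overline{\mathsf{Ell}} \;\simeq\; (\mathcal{U}^3)^{\wedge}_{I_2} \times_{\mathcal{U}^3} \overline{\mathsf{Ell}}_{(p)} \;\simeq\; \bigl(\overline{\mathsf{Ell}}_{(p)}\bigr)^{\wedge}_{Z},
\]
where $Z$ is the preimage of $\mathcal{M}^2$, i.e. the vanishing locus of $I_2$ in $\overline{\mathsf{Ell}}_{(p)}$.

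Next I would identify $Z$. By construction it is the locus of $\overline{\mathsf{Ell}}$ on which the formal group has height $\ge 2$; since an elliptic curve over a field of characteristic $p$ has formal group of height $1$ or $2$ according as it is ordinary or supersingular (nodal fibres contributing only height $1$), the underlying closed subset of $Z$ is exactly $(\mathsf{Ell}^{ss})_{\mathbb{F}_p}$, and a completion depends only on this subset, so
\[
  \widehat{\mathcal{M}}^2 \times_{\mathcal{M}} \overline{\mathsf{Ell}} \;\simeq\; \bigl(\overline{\mathsf{Ell}}_{(p)}\bigr)^{\wedge}_{(\mathsf{Ell}^{ss})_{\mathbb{F}_p}} \;=\; \mathsf{Ell}^{ss}_p .
\]
(At $p=2$ one can be explicit: $I_2=(p,v_1)$ pulls back to $(p,a_1)$ since $v_1 \equiv a_1$ modulo $2$ for the formal group law of a Weierstrass curve and $a_1$ is the Hasse invariant, which recovers the presentation $A'=A[\Delta^{-1}]^{\wedge}_{(p,a_1)}$.) This last identification is where Serre--Tate does the real work: on an Artin local test ring $R$ with nilpotent ideal $I$, $R_0 = R/I$, a point of the left-hand side is a deformation to $R$ of a supersingular elliptic curve $C_0/R_0$, a point of the right-hand side is a deformation of its formal group $C_0^{\wedge}$ (the datum retained by $\widehat{\mathcal{M}}^2$), and Serre--Tate asserts these deformation groupoids agree; the trivialisations of $\omega$ match automatically because $\omega(C)$ and $\omega(C^{\wedge})$ are canonically the same line.

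I expect the step requiring the most care to be the formal-stack bookkeeping behind the first displayed equivalence: one must check that $\widehat{\mathcal{M}}^2 \times_{\mathcal{M}} \overline{\mathsf{Ell}}$ is the colimit over $k$ of the fibre products of the $k$-th infinitesimal thickenings of $\mathcal{M}^2$ in $\mathcal{U}^3$ with $\overline{\mathsf{Ell}}$, so that it is again a formal Deligne--Mumford stack. Choosing a flat Weierstrass atlas for $\overline{\mathsf{Ell}}$ and an affine atlas for $\mathcal{M}$ reduces this to the standard fact that formal completion of an affine scheme commutes with a ring map. Alternatively the proposition can be organized as ``construct the comparison morphism $\mathsf{Ell}^{ss}_p \to \widehat{\mathcal{M}}^2 \times_{\mathcal{M}} \overline{\mathsf{Ell}}$ and verify it is an isomorphism on underlying reduced stacks and on each infinitesimal neighbourhood'', in which case the infinitesimal step is literally Serre--Tate.
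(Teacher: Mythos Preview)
Your argument is correct. The paper gives no proof beyond the single sentence ``The following is implied by the Serre--Tate theorem,'' so your write-up is considerably more detailed than what appears there.

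There is one genuine difference worth noting. Your main line of argument---factor $\overline{\mathsf{Ell}}_{(p)} \to \mathcal{M}$ through $\mathcal{U}^3$, use that formal completion along $I_2$ commutes with base change, and identify the pullback of $\mathcal{M}^2$ with the supersingular locus---does not actually use the Serre--Tate theorem at all. It only uses the characterization ``supersingular $\Leftrightarrow$ formal group of height $2$,'' together with the general fact that completion along a closed substack pulls back to completion along its preimage. The paper, by contrast, attributes the proposition entirely to Serre--Tate. What your alternative organization at the end makes clear is that Serre--Tate is really the statement that the resulting map $\mathsf{Ell}^{ss}_p \to \widehat{\mathcal{M}}^2$ is \emph{formally \'etale} (equivalently, that the deformation functors of $C$ and $C^{\wedge}$ coincide), which the paper needs immediately afterwards for the identification $\hbox{Def}(C,\mathbb{F}_4)\simeq\hbox{Def}(C^{\wedge},\mathbb{F}_4)$, but which is not strictly required for the bare equivalence of stacks in the proposition. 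So your approach is slightly more elementary and isolates more precisely where Serre--Tate enters.
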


Let $E$ be a supersingular elliptic curve over a field $k$ of characteristic $p$ classifying a point $\eta:\hbox{Spec}\,k \rightarrow (\mathsf{Ell})_{\mathbb{F}_p}$. Serre-Tate theory gives an isomophism of the formal neighborhood of $\eta$ with the universal deformation space for the formal group $\widehat{E}$. Therefore there is an equivalence of deformation spaces $$\hbox{Def}(E,k) \simeq \hbox{Def}(\widehat{E},k).$$

\subsection{Level structures}
In this section we review the modular curves. Let $S$ be a scheme over $\mathbb{Z}[1/N]$ and $C$ a smooth elliptic curve over $S$. Let $C[N]$ denote the $N$-torsion points of $C$. The group scheme $C[N]$ is \'etale locally over $S$ isomorphic to the discrete group scheme $(\mathbb{Z}/N\mathbb{Z})^2$ over $C$. Let $Ell(N)$ denote the moduli stack of pairs $(C,\phi)$ where $C$ is a smooth elliptic curve and $\phi$ is a {\it full level-$n$ structure}, a choice of isomorphism $$\xymatrix{\phi: (\mathbb{Z}/N\mathbb{Z})^2 \ar[r]^{\simeq} &C[N]}.$$ Equivalently, the points of ${Ell}(N)$ are triples $(C,P,Q)$ where $P$ and $Q$ are a pair of sections $S \rightarrow C[N]$ that are, locally over $S$, linearly independent. 

Let ${Ell}_1(N)$ denote the moduli stack of pairs $(C,P)$ where $P$ is a primitive $N$-torsion point over $C$. Finally, let ${Ell}_0(N)$ denote the moduli stack of pairs $(C,H)$ where $H$ is a choice of a subgroup scheme $H\subset C[N]$ isomorphic to $\mathbb{Z}/N\mathbb{Z}$.

In this paper we'll work with stacks $\mathsf{Ell}(N)$, $\mathsf{Ell}_1(N)$ and $\mathsf{Ell}_0(N)$ which are $\mathbb{G}_m$-torsors over the modular curves $Ell(N)$, $Ell_1(N)$ and $Ell_0(N)$. The $R$-points of $\mathsf{Ell}(N)$ consists of triples $(C,\eta, (P,Q))$, where $C$ is an elliptic curve over $R$, $\eta$ is a choice of a nowhere vanishing invariant $1$-form of $C$ and $(P,Q)$ is a full level-$N$ structure described as before.

\begin{theorem}
(Deligne, Rapoport) (\cite{KM})The moduli stack $Ell(N)$ is a smooth affine scheme over $\hbox{Spec}\,\mathbb{Z}[1/N]$ for $N\geq 3$. For $N\geq 4$ the moduli stack $Ell_1(N)$ is a smooth affine scheme over $\hbox{Spec}\,\mathbb{Z}[1/N]$.
\end{theorem}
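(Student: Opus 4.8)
The plan is to unpack the four assertions bundled in the statement — relative representability over the moduli of elliptic curves, étaleness after inverting $N$, smoothness over $\mathbb{Z}[1/N]$, and affineness — and to verify them in that order, treating $Ell(N)$ and $Ell_1(N)$ in parallel. For \emph{relative representability}, recall that for an elliptic curve $C/S$ the $N$-torsion $C[N]$ is a finite locally free group scheme of rank $N^2$; hence the functor of full level-$N$ structures is $\underline{\mathrm{Isom}}_{S}\bigl((\mathbb{Z}/N)^2_S,\,C[N]\bigr)$, a closed subscheme of the finite locally free scheme $\underline{\mathrm{Hom}}\bigl((\mathbb{Z}/N)^2_S,\,C[N]\bigr)$ and therefore finite over $S$, while the functor of points of exact order $N$ is cut out inside $C[N]\subset C$ by the vanishing of $N$ and the non-vanishing of proper divisors, again finite over $S$. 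So $Ell(N)\to Ell$ and $Ell_1(N)\to Ell$ are representable by finite morphisms. Over $\mathbb{Z}[1/N]$ multiplication by $N$ is étale, so $C[N]$ is finite étale of rank $N^2$; the Isom-scheme is then a $\mathrm{GL}_2(\mathbb{Z}/N)$-torsor, étale-locally split, hence finite étale, and the exact-order-$N$ locus is open in the finite étale $C[N]$, so both morphisms are finite étale over $\mathbb{Z}[1/N]$.

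Since $Ell$ is smooth of relative dimension one over $\mathrm{Spec}\,\mathbb{Z}$ — visible from the Weierstrass presentation $\mathsf{Ell}\simeq\mathcal{M}_{(A[\Delta^{-1}],\Gamma[\Delta^{-1}])}$ recalled above, where $\Delta\neq 0$ is the smooth complement of a divisor in affine space modulo the smooth coordinate-change groupoid — composing with the étale maps above shows $Ell(N)$ and $Ell_1(N)$ are smooth of relative dimension one over $\mathrm{Spec}\,\mathbb{Z}[1/N]$, in particular smooth Deligne--Mumford stacks. The heart of the matter is \emph{rigidity}: for $N\geq 3$ a full level-$N$ structure, and for $N\geq 4$ a $\Gamma_1(N)$-structure, has trivial automorphisms. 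Because the relative inertia of $Ell(N)\to Ell$ is trivial, one tests this over algebraically closed fields of characteristic prime to $N$. An automorphism $\alpha\neq\pm1$ has order $3,4$ or $6$, occurs only at $j=0,1728$, satisfies an integral quadratic $\alpha^2-t\alpha+1=0$ with $t\in\{-1,0,1\}$, and its reduction $\bar\alpha$ on $C[N]\cong(\mathbb{Z}/N)^2$ has characteristic polynomial $X^2-tX+1$; thus $\bar\alpha$ is the identity only when $t\equiv 2\pmod N$, forcing $\alpha=1$, so no $\alpha\neq 1$ fixes a full level structure once $N\geq 3$, and $-1$ is excluded because negation moves every nonzero $N$-torsion point for $N\geq 3$. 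For $\Gamma_1(N)$ one needs $(\bar\alpha-1)P=0$ for a point $P$ of exact order $N$: again $-1$ sends $P$ to $-P\neq P$, and a short computation with the $\mathrm{End}(C)$-module structure of $C[N]$ shows that for $N\geq 4$ none of the order-$3,4,6$ automorphisms fixes an exact-order-$N$ point (the failure at $N=3$, where an order-$3$ automorphism is unipotent on $C[3]$ and fixes a line, is exactly why $Ell_1(3)$ is excluded). Hence $Ell(N)$ and $Ell_1(N)$ have trivial automorphisms and are algebraic spaces.

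It remains to prove \emph{affineness}, and here I would pass through the coarse moduli space. The coarse space of $Ell$ is $\mathrm{Spec}\,\mathbb{Z}[j]=\mathbb{A}^1_{\mathbb{Z}}$ and the coarse-moduli morphism $Ell\to\mathbb{A}^1_j$ is proper and quasi-finite (Keel--Mori); composing with the finite morphism $Ell(N)\to Ell$ gives a morphism $Ell(N)\to\mathbb{A}^1_j$ from an algebraic space to a scheme which is proper, quasi-finite and of finite type, hence finite by Zariski's main theorem. A quasi-finite separated algebraic space over a scheme is a scheme, so $Ell(N)$ is a scheme, and being finite over the affine scheme $\mathrm{Spec}\,\mathbb{Z}[1/N][j]$ it is affine; the identical argument applies to $Ell_1(N)$. (One then obtains for free that the $\mathbb{G}_m$-torsors $\mathsf{Ell}(N)$ and $\mathsf{Ell}_1(N)$ are affine, being relative spectra of $\bigoplus_{k\in\mathbb{Z}}\omega^{\otimes k}$ over an affine base.)

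The step I expect to be the genuine obstacle is this last one: upgrading "algebraic space with trivial automorphisms" to "affine scheme", since it requires correctly invoking the structure of the coarse-moduli map and Zariski's main theorem. By contrast the rigidity computation, while it must be carried out with care precisely at $j=0$ and $j=1728$, is a finite and classical verification, and smoothness together with étaleness are formal consequences once relative representability is in hand.
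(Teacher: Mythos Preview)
The paper does not prove this theorem; it is stated with attribution to Deligne--Rapoport and a citation to Katz--Mazur, and then used as a black box. Your proposal therefore goes well beyond what the paper offers, supplying a genuine proof outline where the paper gives none.

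Your sketch is essentially the standard argument from the cited references and appears correct. A few small remarks: your claim that nontrivial automorphisms have order $3$, $4$, or $6$ and satisfy $\alpha^2 - t\alpha + 1 = 0$ with $t\in\{-1,0,1\}$ must be justified also in characteristics $2$ and $3$ (which are not excluded for general $N$), where the automorphism groups of supersingular curves are larger; the quadratic relation still holds because any endomorphism satisfies its characteristic polynomial with $\deg\alpha = 1$ and $|t|\leq 2$, but you should say this explicitly rather than appealing to the $j=0,1728$ classification which is a characteristic-zero statement. The fixed-point computation (e.g.\ $\alpha P = P$ with $\alpha^2+\alpha+1=0$ forces $3P=0$) then goes through uniformly. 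The affineness step via the coarse moduli map and Zariski's main theorem is the right idea; you are implicitly using that a proper quasi-finite morphism of algebraic spaces is finite, which is standard.
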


The maps forgetting level structures induce a diagram %of the $\mathbb{G}_m$-torsors 
$$\xymatrix{
\mathsf{Ell}(N) \ar[d]_{\Gamma_1(N)} \ar@/_5pc/[ddd]_{GL(2,N)} \ar@/^2pc/[dd]^{\Gamma_0(N)}\\
\mathsf{Ell}_1(N) \ar[d]_{(\mathbb{Z}/N\mathbb{Z})^{\times}} \\
\mathsf{Ell}_0(N) \ar[d] \\
\mathsf{Ell}\times \hbox{Spec}\,\mathbb{Z}[1/N]
}$$

where all the arrows are finite \'etale and the labeled ones are Galois.

The Galois groups are defined as follows:
$$\Gamma_1(N) = \left\{ \left(\begin{array}{cc} 1 & * \\0 & * \end{array}\right ) \in GL(2,N)\right\}$$
$$\Gamma_0(N) = \left\{ \left(\begin{array}{cc} * & * \\0 & * \end{array}\right ) \in GL(2,N)\right\}$$

Let $\mathsf{Ell}(N)_p$, $\mathsf{Ell}_1(N)_p$ and $\mathsf{Ell}_0(N)_p$ denote the completions at $p$. Let $\mathsf{Ell}(N)^{ss}_p$ denote the pullback,
$$\xymatrix{
\mathsf{Ell}(N)^{ss}_p \ar[r] \ar[d] &\mathsf{Ell}(N)_p \ar[d] \\
\mathsf{Ell}^{ss}_p \ar[r] &\mathsf{Ell}_p
}$$

Since $\mathsf{Ell}(N)_p$ is formal affine (assuming $N\geq 3$) and the right vertical arrow is an \'etale $GL(2,N)$-torsor, Serre-Tate theory implies there is an equivalence $$\mathsf{Ell}(N)_p^{ss} \simeq \coprod_i\hbox{Spf}\,W(k_i)[[u_1]][u^{\pm 1}]$$ for a finite set of fields $k_i$ (depending on $N$).

\subsection{Restatement of the main theorem}
Given a formal Deligne-Mumford stack $\mathcal{S}$ over $\mathbb{Z}_p$, define its maximal unramified cover $\mathcal{S}^{nr}$ to be the pullback $\mathcal{S}\times_{\mathbb{Z}_p}\hbox{Spf}\,W(\overline{\mathbb{F}}_p)$. The map $\mathcal{S}^{nr} \rightarrow \mathcal{S}$ is an pro-\'etale cover with Galois group $\hbox{Gal}\,(\overline{\mathbb{F}}_p/\mathbb{F}_p) = \hat{\mathbb{Z}}$. Our main Theorem 1.1 can be restated in the following way.

\begin{theorem}\label{restate}
There is a map of stacks over $\widehat{\mathcal{M}}^2$, $${\widehat{\mathcal{M}}^2 \times_{\mathcal{M}} \mathcal{M}_{ER(2)}}^{nr} \rightarrow \mathsf{Ell}_2^{ss}.$$ The map over $\widehat{\mathcal{M}}^2$ induced by forgetting level structure $$\left(\mathsf{Ell}_1(3)^{ss}_2\right)^{nr} \rightarrow \left(\mathsf{Ell}_0(3)^{ss}_2\right)^{nr}$$ is equivalent to the map $${\widehat{\mathcal{M}}^2 \times_{\mathcal{M}} \mathcal{M}_{E(2)}}^{nr} \rightarrow {\widehat{\mathcal{M}}^2 \times_{\mathcal{M}}\mathcal{M}_{ER(2)}}^{nr}$$ over $\widehat{\mathcal{M}}^2$ induced by the inclusion of fixed points $ER(2) \rightarrow E(2)$.

\end{theorem}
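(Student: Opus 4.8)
## Proof proposal

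The plan is to pin down both sides of the asserted equivalence as explicit Hopf algebroids over $\widehat{\mathcal{M}}^2$ and then identify them by matching the formal group data together with the $C_2$-action. First I would unwind the left-hand side via Averett's theorem. By equation~\eqref{Averett} we have $L_{K(2)}ER(2) \simeq E_2^{h(C_2 \times H(2))}$, where $H(2) = C_3 \rtimes Gal$ and the $C_2$ is generated by $[-1]_{\Gamma_2} \in \mathbb{S}_2$. After adjoining roots of unity of order prime to $2$ (i.e.\ passing to the maximal unramified cover), the $Gal$-action is split off, so that $\bigl(\widehat{\mathcal{M}}^2 \times_{\mathcal{M}} \mathcal{M}_{ER(2)}\bigr)^{nr}$ is presented by the Hopf algebroid obtained from $\mathrm{Spf}\,\pi_* E_2$ by descent along $C_2 \times C_3 \subset \mathbb{S}_2$. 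Concretely, using Lemma~\ref{smash} and the atlas for $\widehat{\mathcal{M}}^2$, the stack is $\mathrm{Spf}(\pi_* L_{K(2)}E_2)\quo (C_2\times C_3)$, and similarly $\bigl(\widehat{\mathcal{M}}^2 \times_{\mathcal{M}} \mathcal{M}_{E(2)}\bigr)^{nr}$ is $\mathrm{Spf}(\pi_* L_{K(2)}E_2)\quo C_3$. So the right-hand map of the theorem is exactly the quotient map by the residual $C_2$.

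Next I would identify the modular side. Working at $p=2$, $N=3$, the Serre--Tate equivalence $\mathsf{Ell}^{ss}_2 \simeq \widehat{\mathcal{M}}^2 \times_{\mathcal{M}} \overline{\mathsf{Ell}}$ of the displayed proposition, together with the pullback defining $\mathsf{Ell}(3)^{ss}_2$ and the fact that $\mathsf{Ell}(3)_2$ is formal affine, gives $\mathsf{Ell}(3)^{ss}_2 \simeq \coprod_i \mathrm{Spf}\,W(k_i)[[u_1]][u^{\pm1}]$. Since at $p=2$ there is a unique supersingular $j$-invariant and the full level-$3$ cover is connected after base change to $\overline{\mathbb{F}}_2$ — this is where I would invoke Deligne--Rapoport / Katz--Mazur irreducibility of the modular curve $Y(3)$ over $\overline{\mathbb{F}}_2$ — the nr-cover is a single copy $\mathrm{Spf}\,W(\overline{\mathbb{F}}_2)[[u_1]][u^{\pm1}] \simeq \mathrm{Spf}\,\pi_* E_2$. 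Thus $\bigl(\mathsf{Ell}(3)^{ss}_2\bigr)^{nr}$ is a $GL(2,3)$-torsor presentation of $\bigl(\widehat{\mathcal{M}}^2\times_{\mathcal{M}}\overline{\mathsf{Ell}}\bigr)^{nr}$, and the level-forgetting maps realize $\mathsf{Ell}_1(3)^{ss,nr}_2 = \mathrm{Spf}\,\pi_* E_2 \quo \Gamma_1(3)$ and $\mathsf{Ell}_0(3)^{ss,nr}_2 = \mathrm{Spf}\,\pi_* E_2 \quo \Gamma_0(3)$, with $\Gamma_0(3)/\Gamma_1(3) \cong (\mathbb{Z}/3)^\times \cong C_2$.

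The heart of the argument is then a group-theoretic identification: the action of $\Gamma_0(3) = \bigl\{\left(\begin{smallmatrix} * & * \\ 0 & * \end{smallmatrix}\right)\bigr\} \subset GL(2,3)$ on the Lubin--Tate space $\mathrm{Def}(\widehat{E})$, transported through Serre--Tate theory, must be matched with the action of $C_2 \times C_3 \subset \mathbb{S}_2$. Here $\Gamma_0(3)$ has order $12$ and is isomorphic (as an abstract group, and compatibly with its Lubin-Tate action) to $C_2\times C_3$: the order-$2$ upper-triangular unipotent-free part, i.e.\ the scalars together with one more involution, and the order-$3$ subgroup generated by a diagonal element of order $3$. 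I would check that under the isomorphism $\mathbb{S}_2 \supset \widehat{A}_4 = Q_8\rtimes C_3$, the subgroup $C_2\times C_3$ (central $C_2 = \langle[-1]_{\Gamma_2}\rangle$ times the Galois-fixed $C_3$) acts on $\mathrm{Def}(\Gamma_2)$ with precisely the same quotient stack as $\Gamma_0(3)$ acting on $\mathrm{Def}(\widehat E)$; since $\Gamma_1(3)$ corresponds to the central $C_3$ (quotient by scalars in $\Gamma_0(3)$ being $C_2$), the residual $C_2$ on $\mathsf{Ell}_1(3)^{ss,nr}_2$ matches $[-1]_{\Gamma_2}$ exactly. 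Combining, $\mathsf{Ell}_0(3)^{ss,nr}_2 \simeq \mathrm{Spf}\,\pi_* E_2\quo(C_2\times C_3) \simeq \bigl(\widehat{\mathcal{M}}^2\times_{\mathcal{M}}\mathcal{M}_{ER(2)}\bigr)^{nr}$ and $\mathsf{Ell}_1(3)^{ss,nr}_2 \simeq \mathrm{Spf}\,\pi_* E_2 \quo C_3 \simeq \bigl(\widehat{\mathcal{M}}^2\times_{\mathcal{M}}\mathcal{M}_{E(2)}\bigr)^{nr}$, with the forgetful map corresponding to the $C_2$-quotient, which is exactly the map induced by $ER(2)\to E(2)$. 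The map to $\mathsf{Ell}^{ss}_2$ is then the composite with the further quotient by $GL(2,3)$.

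I expect the main obstacle to be the last step: matching the \emph{action} of $\Gamma_0(3)$ on the Serre--Tate deformation space with the action of $C_2\times C_3\subset \mathbb{S}_2$ on $\mathrm{Def}(\Gamma_2)$, not merely checking that the abstract groups agree. One must track how a $\Gamma_0(3)$-level structure on the supersingular curve $E/\overline{\mathbb{F}}_2$ translates, via Serre--Tate, into an automorphism-datum of the formal group $\widehat E \simeq \Gamma_2$, and verify this is compatible with the standard identification $\mathrm{Aut}(\Gamma_2) \hookrightarrow \mathbb{S}_2$ — in particular that the subgroup $H\subset C[3]$ of a $\Gamma_0(3)$-structure is exactly the data whose stabilizer in $\mathrm{Aut}(E)$-acting-on-deformations is the copy of $C_2\times C_3$. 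The calculations of Mahowald--Rezk~\cite{MR} on $TMF_0(3)$ and $TMF_1(3)$, which exhibit $TMF_1(3)$ as a generalized $E(2)$, should supply the needed explicit formulas for these actions, and I would lean on them to close this gap.
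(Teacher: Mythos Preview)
Your route through the full level cover $\mathsf{Ell}(3)$ contains an arithmetic error upstream of the ``main obstacle'' you flag. You assert that $(\mathsf{Ell}(3)^{ss}_2)^{nr}$ is a single copy of $\hbox{Spf}\,\pi_*E_2$, invoking irreducibility of $Y(3)$; but irreducibility of the whole modular curve says nothing about the number of supersingular points, which is $|GL(2,3)|/|\hbox{Aut}_{\overline{\mathbb{F}}_2}(C)| = 48/24 = 2$, since the unique supersingular curve at $p=2$ has $\hbox{Aut}(C)=G_{24}\cong SL(2,3)$ acting faithfully on $C[3]$. So $(\mathsf{Ell}(3)^{ss}_2)^{nr}$ has two components, and your claim that $\Gamma_0(3)$ ``has order $12$ and is isomorphic to $C_2\times C_3$'' is self-contradictory: $|\Gamma_0(3)|=12$, not $6$. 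What is actually true is that $\Gamma_0(3)$ swaps the two components with stabilizer $\Gamma_0(3)\cap SL(2,3)\cong C_6$, and it is \emph{this} subgroup of $\hbox{Aut}(C)=G_{24}\subset\mathbb{S}_2$ that must be matched with $\langle[-1]_{C^\wedge}\rangle\times\mathbb{F}_4^\times$ coming from Averett; likewise $\Gamma_1(3)\cap SL(2,3)\cong C_3$, not $\Gamma_1(3)$ itself, governs $\mathsf{Ell}_1(3)$. Your argument as written does not reach this point.

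The paper avoids this bookkeeping entirely by never passing through $\mathsf{Ell}(3)$. It uses the Mahowald--Rezk normal form directly: every $(C',P,\eta)$ with $P$ of exact order $3$ is uniquely isomorphic to $(y^2+a_1xy+a_3y=x^3,\,(0,0),\,\eta)$, giving an explicit affine model $\mathsf{Ell}_1(3)^{ss}_2\simeq\hbox{Spf}\,\mathbb{Z}_2[[a_1]][a_3^{\pm1}]$. The map from $\hbox{Def}(C,\mathbb{F}_4)$ classifying $(\widetilde{C},(0,0),\eta)$ is then visibly a $C_3\rtimes Gal$-torsor with $C_3=\mathbb{F}_4^\times$ acting by $u\mapsto\omega u$, $ua_1\mapsto\omega^2 ua_1$; and $\mathsf{Ell}_0(3)$ follows since $[-1]_C$ is precisely the automorphism sending $(0,0)\mapsto -(0,0)$ while fixing $\langle(0,0)\rangle$. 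On the chromatic side the paper does not quote Averett for $E(2)$ but computes directly that the elements of $\mathbb{S}_2$ fixing $v_1,v_2$ are exactly the Teichm\"uller lifts $\mathbb{F}_4^\times$. This makes the action-matching step you worried about immediate rather than an obstacle: both $C_3$'s are literally the same $\mathbb{F}_4^\times\subset\mathbb{S}_2$, and both $C_2$'s are literally $\langle[-1]_{C^\wedge}\rangle$.
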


\section{Real Johnson-Wilson theory from modular curves}

\subsection{Level $3$-structures at the prime $2$}

Consider the supersingular elliptic curve
\begin{equation}\label{ss}
C:x^3+y^2+y=0 \in \mathbb{P}^2_{\mathbb{F}_4}.
\end{equation}

The automorphism group $G_{24} = \hbox{Aut}_{\mathbb{F}_4}(C)$ is the group of units in the maximal order of a rational quarternion algebra $\mathbb{Q}\{i,j,k\}$. It's isomorphic to the binary tetrahedral group $\widehat{A}_4 = Q_8 \rtimes C_3$ of order $24$, where $C_3$ acts on $Q_8$ by conjugation. It contains the quarternion group $Q_8=\{\pm 1,\pm i, \pm j \pm k\}$ and $16$ other elements $(\pm 1\pm i \pm j\pm k)/2$. 

Let $C^{\wedge}$ be the completion of $C$ at the identity section. $C^{\wedge}$ is a formal group of height $2$ over $\mathbb{F}_4$. The automorphism group $\hbox{Aut}(C^{\wedge}) = \mathcal{O}^{\times}_{D_{1/2},\mathbb{Q}_2}$ is the group of units in the maximal order of the $2$-adic quaternion algebra $$D_{\frac{1}{2},\mathbb{Q}_2} = \mathbb{Q}_2\{i,j,k\}.$$ Abstractly this is the completion of the Hurwitz lattice $\mathbb{Z}(\pm1,\pm i,\pm j, \pm k) \coprod (\pm 1\pm i\pm j \pm k)/2$ at the ideal $(2)$. Notice that $G_{24}$ is the maximal finite subgroup. 

\begin{prop}
The map $\hbox{Spec}\,\mathbb{F}_4 \rightarrow (Ell^{ss})_{\mathbb{F}_2}$ classifying $C$ is a presentation and an \'etale $G_{24} \rtimes \hbox{Gal}(\mathbb{F}_4/\mathbb{F}_2)$-torsor.
\end{prop}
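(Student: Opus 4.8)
The proposition decomposes into two essentially independent tasks: showing that $\mathrm{Spec}\,\mathbb{F}_4\to(Ell^{ss})_{\mathbb{F}_2}$ is a (necessarily faithfully flat) atlas, and identifying the associated groupoid with the action groupoid of $G_{24}\rtimes\mathrm{Gal}(\mathbb{F}_4/\mathbb{F}_2)$ acting on $\mathrm{Spec}\,\mathbb{F}_4$. The first is controlled by the fact that $C$ is the unique supersingular elliptic curve in characteristic $2$; the second by the computation of $\underline{\mathrm{Aut}}(C/\mathbb{F}_4)$ together with the base change $\mathbb{F}_4\otimes_{\mathbb{F}_2}\mathbb{F}_4$. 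My plan is first to recall that $(Ell^{ss})_{\mathbb{F}_2}$, being a locally closed substack of the Deligne--Mumford stack $\overline{Ell}\times_{\mathbb{Z}}\mathbb{F}_2$, is itself a zero-dimensional Deligne--Mumford stack, so it suffices to exhibit a surjection from $\mathrm{Spec}\,\mathbb{F}_4$ and to compute the $2$-fibre product $\mathrm{Spec}\,\mathbb{F}_4\times_{(Ell^{ss})_{\mathbb{F}_2}}\mathrm{Spec}\,\mathbb{F}_4$.

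For surjectivity I would use that the Hasse invariant of a Weierstrass curve in characteristic $2$ is its coefficient $a_1$, and that the curve \eqref{ss} has $a_1=0$ together with $\Delta=1$ and $c_4=0$; hence it is a smooth supersingular elliptic curve of $j$-invariant $0$. By the Eichler--Deuring mass formula, $\sum 1/|\mathrm{Aut}(E)| = (2-1)/24 = 1/24$, the sum running over supersingular $E/\overline{\mathbb{F}}_2$; once the $24$ automorphisms of $C$ are produced (next paragraph), the single term $1/|\mathrm{Aut}(C_{\overline{\mathbb{F}}_2})|\le 1/24$ already exhausts this sum of positive numbers, so $C_{\overline{\mathbb{F}}_2}$ is the \emph{only} supersingular elliptic curve and $|\mathrm{Aut}(C_{\overline{\mathbb{F}}_2})|=24$. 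In particular $\mathrm{Spec}\,\mathbb{F}_4\to(Ell^{ss})_{\mathbb{F}_2}$ meets the unique geometric point of the target, hence is surjective.

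Next I would compute automorphisms. A Weierstrass automorphism of $C$ fixing the point at infinity has the form $(x,y)\mapsto(u^2x+r,\,u^3y+su^2x+t)$, and invariance of $y^2+y=x^3$ in characteristic $2$ forces $u^3=1$, $r=s^2$, $s^4=s$, and $t^2+t=r^3$. These equations define a finite group scheme over $\mathbb{F}_2$ which is \'etale --- each defining polynomial is separable, since $(u^3-1)'$ is a unit where $u^3=1$, $(s^4-s)'=-1$, and $(t^2+t-r^3)'=1$ in $t$ --- and of rank $3\cdot4\cdot2=24$; moreover all $24$ solutions already lie in $\mathbb{F}_4$ (the cube roots of unity, the roots of $s^4=s$, and, since $r^3\in\{0,1\}$, the relevant Artin--Schreier roots). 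Thus $\underline{\mathrm{Aut}}(C/\mathbb{F}_4)$ is the constant \'etale group scheme $\underline{G_{24}}$ and $\mathrm{Aut}_{\mathbb{F}_4}(C)=\mathrm{Aut}_{\overline{\mathbb{F}}_2}(C)=G_{24}$; one recognizes $[-1]_C\colon(x,y)\mapsto(x,y+1)$, a $C_3$ from the scalings $x\mapsto\zeta x$ with $\zeta\in\mu_3(\mathbb{F}_4)$, and the quaternion subgroup $Q_8$, giving the identification $G_{24}\cong Q_8\rtimes C_3=\widehat{A}_4$ and with the units of the maximal order of $\mathbb{Q}\{i,j,k\}$ recalled above. Étaleness of $\underline{\mathrm{Aut}}(C)$ also yields that the atlas $\mathrm{Spec}\,\mathbb{F}_4\to(Ell^{ss})_{\mathbb{F}_2}$ is \'etale.

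The last step is to compute the $2$-fibre product: since $C$ is defined over $\mathbb{F}_2\subset\mathbb{F}_4$ and $\mathbb{F}_4\otimes_{\mathbb{F}_2}\mathbb{F}_4\cong\prod_{g\in\mathrm{Gal}(\mathbb{F}_4/\mathbb{F}_2)}\mathbb{F}_4$, the product $\mathrm{Spec}\,\mathbb{F}_4\times_{(Ell^{ss})_{\mathbb{F}_2}}\mathrm{Spec}\,\mathbb{F}_4$ is the scheme of isomorphisms between the two pullbacks of $C$ over the two Galois sheets, which on each sheet is $\underline{\mathrm{Aut}}(C/\mathbb{F}_4)=\underline{G_{24}}$; composing isomorphisms across the sheets contributes exactly the Frobenius twist, so the $2$-fibre product is canonically $\mathrm{Spec}\,\mathbb{F}_4\times\bigl(G_{24}\rtimes\mathrm{Gal}(\mathbb{F}_4/\mathbb{F}_2)\bigr)$ with its two projections identified with the source and target of the action groupoid --- precisely the statement that the atlas is a $G_{24}\rtimes\mathrm{Gal}(\mathbb{F}_4/\mathbb{F}_2)$-torsor. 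The main obstacle throughout is the automorphism analysis: pinning down the full automorphism group over exactly the right field and verifying that $\underline{\mathrm{Aut}}(C)$ is \'etale, since once that is in hand both the surjectivity (via the mass formula) and the groupoid identification are formal. Alternatively one may invoke the standard classification of automorphisms of elliptic curves in characteristic $2$ and the known description of the supersingular locus of $\overline{Ell}$ at the prime $2$.
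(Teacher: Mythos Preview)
Your proof is correct. The paper actually states this proposition without proof, treating it as an immediate consequence of the preceding paragraph, which asserts (without computation) that $\mathrm{Aut}_{\mathbb{F}_4}(C)\cong G_{24}$ and describes this group as the units in the Hurwitz order. So you have supplied considerably more detail than the paper does: the explicit Weierstrass-transformation analysis giving the system $u^3=1$, $r=s^2$, $s^4=s$, $t^2+t=r^3$, the separability check for \'etaleness of $\underline{\mathrm{Aut}}(C)$, the verification that all $24$ solutions already lie in $\mathbb{F}_4$, and the Eichler--Deuring argument for uniqueness of the supersingular point.

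Your decomposition---surjectivity via the mass formula, automorphism computation, and then the $2$-fibre product over $\mathrm{Spec}(\mathbb{F}_4\otimes_{\mathbb{F}_2}\mathbb{F}_4)$---is the natural route and matches in spirit what the paper takes for granted. The one place worth a remark is the semidirect product: you should make explicit that Frobenius acts nontrivially on $G_{24}$ (e.g.\ on the parameters $(u,s,t)\in\mathbb{F}_4$ by squaring, so the $C_3$ of cube roots of unity is genuinely twisted), which is what forces $G_{24}\rtimes\mathrm{Gal}$ rather than a direct product. You gesture at this with ``Frobenius twist'' but the paper's statement depends on it, and it is the only point where an unwary reader might default to the wrong group.
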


It follows that the map $\hbox{Def}(C,\mathbb{F}_4) \rightarrow Ell^{ss}_2$ classifying the universal deformation of $C$ is an \'etale $G_{24} \rtimes \hbox{Gal}(\mathbb{F}_4/\mathbb{F}_2)$-torsor. By Serre-Tate theory there is an isomorphism of deformation spaces $$\hbox{Def}(C,\mathbb{F}_4) \simeq \hbox{Spf}\,W(\mathbb{F}_4)[[a_1]].$$ The curve $C$ lifts to $W(\mathbb{F}_4)[[a_1]]$ as $\widetilde{C}:y^2+ a_1xy + y = x^3$, which is the universal deformation curve over $\hbox{Def}(C,\mathbb{F}_4)$. $\widetilde{C}$ lifts further to 
\begin{equation}\label{ssdeform}
y^2+a_1uxy +u^3y =x^3
\end{equation}
over $\hbox{Spf}\,W(F_4)[[a_1]][u^{\pm 1}]$. We shall call this $\widetilde{C}$ from now.

\begin{prop} \cite[Theorem 3.1]{HM} The map $$\hbox{Spf}\,W(\mathbb{F}_4)[[a_1]][u^{\pm 1}] \rightarrow \mathsf{Ell}_2^{ss} \simeq \mathcal{M}_{(A',\Gamma')}$$ classifying the curve $$\widetilde{C}:y^2 + a_1uxy + u^3y = x^3$$ is a presentation and an \'etale $G_{24} \rtimes \hbox{Gal}(\mathbb{F}_4/\mathbb{F}_2)$-torsor.
\end{prop}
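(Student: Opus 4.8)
The plan is to deduce this from the fact, recorded above, that $\hbox{Def}(C,\mathbb{F}_4)\to Ell^{ss}_2$ is an \'etale $G_{24}\rtimes\hbox{Gal}(\mathbb{F}_4/\mathbb{F}_2)$-torsor with $\hbox{Def}(C,\mathbb{F}_4)\simeq\hbox{Spf}\,W(\mathbb{F}_4)[[a_1]]$ and universal deformation $C_{\mathrm{univ}}:y^2+a_1xy+y=x^3$, by adjoining the $\mathbb{G}_m$-torsor datum that separates $\mathsf{Ell}^{ss}_2$ from $Ell^{ss}_2$. Before that I would recall the one genuinely geometric input behind the preceding paragraph, namely that $C_{\mathrm{univ}}$ really does pro-represent $\hbox{Def}(C,\mathbb{F}_4)$: the Hasse invariant of a Weierstrass family in characteristic $2$ is $a_1$, so the mod-$2$ reduction of $C_{\mathrm{univ}}$ cuts out the supersingular locus of $(\overline{\mathsf{Ell}})_{\mathbb{F}_2}$ as a reduced point, and together with Serre--Tate and the one-dimensionality of the Lubin--Tate space $\hbox{Spf}\,W(\mathbb{F}_4)[[u_1]]$ of the height-$2$ formal group $\widehat C$ this forces $a_1$ to be a coordinate and $C_{\mathrm{univ}}$ to be versal. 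This is exactly \cite[Theorem 3.1]{HM}, so I would cite it rather than reprove it.

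Next I would form the Cartesian square
$$\xymatrix{
\hbox{Def}(C,\mathbb{F}_4)\times_{Ell^{ss}_2}\mathsf{Ell}_2^{ss} \ar[r] \ar[d] & \mathsf{Ell}_2^{ss} \ar[d]\\
\hbox{Def}(C,\mathbb{F}_4) \ar[r] & Ell^{ss}_2
}$$
in which the right-hand vertical arrow is the $\mathbb{G}_m$-torsor of trivializations of $\omega$. Its pullback along $\hbox{Def}(C,\mathbb{F}_4)\to Ell^{ss}_2$ is the $\mathbb{G}_m$-torsor of trivializations of $\omega_{C_{\mathrm{univ}}}$; since the Weierstrass normal form furnishes the nowhere-vanishing invariant differential $\omega_0=dx/(2y+a_1x+1)$, that torsor is canonically trivial, and recording a trivialization as $u\cdot\omega_0$ identifies the top-left corner with $\hbox{Spf}\,W(\mathbb{F}_4)[[a_1]][u^{\pm1}]$. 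Putting the pair $(C_{\mathrm{univ}},u\cdot\omega_0)$ back into Weierstrass normal form by the evident rescaling in $u$ turns $C_{\mathrm{univ}}$ into $\widetilde C:y^2+a_1uxy+u^3y=x^3$, i.e. \eqref{ssdeform}; thus the map in the statement is precisely the left-hand vertical arrow of the square.

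Finally, a base change of an \'etale $G_{24}\rtimes\hbox{Gal}(\mathbb{F}_4/\mathbb{F}_2)$-torsor is again such a torsor (the group is finite, so no pro-completion intervenes), with $G_{24}$ acting through the deformed automorphisms of $C$ and $\hbox{Gal}(\mathbb{F}_4/\mathbb{F}_2)$ acting through Frobenius on $W(\mathbb{F}_4)$ together with the induced outer action on $\hbox{Aut}(C)$ (which inverts $C_3$); and a torsor under a finite \'etale group is a finite \'etale surjection from a formal affine scheme, hence a presentation. The step I expect to be the main obstacle is the middle one: one must check that the tautological object over the trivialized $\mathbb{G}_m$-torsor is literally the displayed curve $\widetilde C$ — tracking the Weierstrass rescaling by $u$ against the chosen trivialization, and verifying that $\Delta(\widetilde C)$ is a unit after completion at $(2,a_1)$ so that $\widetilde C$ genuinely lands in $\mathsf{Ell}^{ss}_2\simeq\mathcal{M}_{(A',\Gamma')}$. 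Once this and the versality of $C_{\mathrm{univ}}$ (from \cite{HM}) are in hand, the remainder is the formal descent bookkeeping of torsors under pullback.
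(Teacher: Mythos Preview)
The paper does not prove this proposition: it is stated with a citation to \cite[Theorem~3.1]{HM} and followed only by the remark ``This is a restatement of $L_{K(2)}TMF = EO_2$.'' So there is no in-paper argument to compare against; your proposal is strictly more than what the paper supplies, and the deduction you give --- pull back the $G_{24}\rtimes\hbox{Gal}$-torsor $\hbox{Def}(C,\mathbb{F}_4)\to Ell^{ss}_2$ along the $\mathbb{G}_m$-torsor $\mathsf{Ell}^{ss}_2\to Ell^{ss}_2$, trivialize $\omega$ via the Weierstrass differential, and rescale --- is exactly the right way to pass from Proposition~3.1 and the Serre--Tate identification recorded just above it to the $\mathbb{G}_m$-equivariant statement.

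One slip to fix: in your Cartesian square the map in the statement is the \emph{top horizontal} arrow (its target is $\mathsf{Ell}^{ss}_2$), not the left vertical arrow; the left vertical is the pulled-back $\mathbb{G}_m$-torsor $\hbox{Spf}\,W(\mathbb{F}_4)[[a_1]][u^{\pm1}]\to\hbox{Def}(C,\mathbb{F}_4)$. Your final paragraph already treats the correct arrow (``a base change of an \'etale $G_{24}\rtimes\hbox{Gal}$-torsor is again such a torsor'' is a statement about the top horizontal map), so this is only a labeling error, not a gap in the argument. The remaining checks you flag --- that the rescaled pair lands on $\widetilde{C}$ and that $\Delta(\widetilde{C})=u^{12}(a_1^3-27)$ is a unit in $W(\mathbb{F}_4)[[a_1]][u^{\pm1}]$ --- are routine.
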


This is a restatement of $$L_{K(2)}TMF = EO_2.$$

The level $3$-structures on elliptic curves and their associated moduli stacks are related by finite \'etale morphisms
$$\xymatrix{
\mathsf{Ell}(3) \ar[r]^6 &\mathsf{Ell}_1(3) \ar[r]^2 &\mathsf{Ell}_0(3) \ar[rr]^4 &&\mathsf{Ell}\times\hbox{Spec}\,\mathbb{Z}[1/3]
}$$ of degrees 6, 2 and 4. The related modular groups are as follows:

$$\Gamma_1(3) = \left\{ \left(\begin{array}{cc} 1 & * \\0 & * \end{array}\right ) \in GL(2,3)\right\} \simeq C_3 \ltimes C_2$$
$$\Gamma_0(3) = \left\{ \left(\begin{array}{cc} * & * \\0 & * \end{array}\right ) \in GL(2,3)\right\} \simeq C_6 \ltimes C_2$$

The point $(0,0)$ on the Weierstrass normal form of a smooth elliptic curve is a point of order $3$ if it is of the form $$C(a_1,a_3): y^2 + a_1xy + a_3y = x^3, \,\, a_3 \neq 0.$$ The curve $C(a_1,a_3)$ comes with the invariant $1$-form $$\eta = \frac{dx}{2y+a_1x+a_3} = \frac{dy}{3x^2-a_1y}.$$ 

Furthermore, if $(C',P,\eta')$ is a smooth elliptic curve with a point $P$ of order $3$ and an invariant $1$-form $\eta'$ then there exists a {\it unique} isomorphism of the triple (see \cite[Prop. 3.2]{MR}) $$(C(a_1,a_3), (0,0), \eta) \simeq (C',P,\eta'). $$

This gives us the following equivalence of stacks. Notation: $\Delta = a_3^3(a_1^3 - 27a_3)$.

$$\mathsf{Ell}_1(3) = \hbox{Spec}\,\mathbb{Z}[1/3][a_1,a_3^{\pm 1},\Delta^{-1}]$$
$$\mathsf{Ell}_1(3)_2^{ss} = \hbox{Spf}\,\mathbb{Z}_2[[a_1]][a_3^{\pm 1}]$$

\begin{prop}\label{TMF13}
The map $$\hbox{Spf}\,W(\mathbb{F}_4)[[a_1]][u^{\pm 1}] \rightarrow \mathsf{Ell}_1(3)_2^{ss}$$ classifying the point $(\widetilde{C}, (0,0), \frac{dx}{2y+a_1ux+u^3})$ is a presentation and an \'etale $C_3\rtimes \hbox{Gal}(\mathbb{F}_4/\mathbb{F}_2)$-torsor. The embedding $C_3 \subset \mathbb{S}_2$ is induced by lifting the $2$-adic Teichm\"uller character $\mathbb{F}_4^{\times} \rightarrow W(\mathbb{F}_4)^{\times}$.
\end{prop}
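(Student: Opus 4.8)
The plan is to deduce the statement from the previous proposition --- that $\hbox{Spf}\,W(\mathbb{F}_4)[[a_1]][u^{\pm 1}]\to\mathsf{Ell}_2^{ss}$ is a presentation and an \'etale $G_{24}\rtimes\hbox{Gal}(\mathbb{F}_4/\mathbb{F}_2)$-torsor (\cite[Theorem 3.1]{HM}) --- together with the Mahowald--Rezk identification $\mathsf{Ell}_1(3)_2^{ss}=\hbox{Spf}\,\mathbb{Z}_2[[a_1]][a_3^{\pm 1}]$ recalled just above. First I would observe that on the curve \eqref{ssdeform} the coefficient $a_3=u^3$ is a unit, so the point $(0,0)$ has exact order $3$ and, together with the displayed differential $\frac{dx}{2y+a_1ux+u^3}$, furnishes a $\Gamma_1(3)$-level structure; thus the map of the statement is precisely the lift of the classifying map of \cite[Theorem 3.1]{HM} along the forgetful map $\mathsf{Ell}_1(3)_2^{ss}\to\mathsf{Ell}_2^{ss}$. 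By the uniqueness clause of \cite[Prop.\ 3.2]{MR}, which is exactly what underlies $\mathsf{Ell}_1(3)_2^{ss}=\hbox{Spf}\,\mathbb{Z}_2[[a_1]][a_3^{\pm 1}]$, this lift corresponds to the ring homomorphism
$$\varphi\colon\mathbb{Z}_2[[a_1]][a_3^{\pm 1}]\longrightarrow W(\mathbb{F}_4)[[a_1]][u^{\pm 1}],\qquad a_1\mapsto a_1u,\quad a_3\mapsto u^3,$$
extending $\mathbb{Z}_2\hookrightarrow W(\mathbb{F}_4)$, and everything reduces to analysing $\varphi$.

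Next I would check that $\varphi$ is finite \'etale, which gives at once that the map is a presentation. Writing $W(\mathbb{F}_4)=\mathbb{Z}_2[\omega]$, the target is free of rank $6$ over the source on the basis $\{1,\omega\}\cdot\{1,u,u^2\}$, since every monomial $a_1^n u^m$ equals $(a_1u)^n(u^3)^q u^r$ with $r\in\{0,1,2\}$; and $\varphi$ is \'etale because adjoining $\omega$ is unramified over $\mathbb{Z}_2$ (the discriminant of $\omega^2+\omega+1$ is the unit $-3$) and $u^3-a_3$ is separable ($3$ and $u$ being units). Being finite flat of constant positive rank, $\varphi$ is faithfully flat, hence a presentation.

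To identify the Galois group I would compute the self-tensor $B'\otimes_B B'$, where $B=\mathbb{Z}_2[[a_1]][a_3^{\pm 1}]$ and $B'=W(\mathbb{F}_4)[[a_1]][u^{\pm 1}]$, in three stages, writing the second factor with variables $\omega',a_1',u'$. First $W(\mathbb{F}_4)\otimes_{\mathbb{Z}_2}W(\mathbb{F}_4)\simeq\prod_{\hbox{Gal}(\mathbb{F}_4/\mathbb{F}_2)}W(\mathbb{F}_4)$, since $\mathbb{F}_4/\mathbb{F}_2$ is \'etale Galois. Then the relation $u^3={u'}^3$ forces $u'/u$ to be one of the three cube roots of unity of $W(\mathbb{F}_4)$ (pairwise distinct mod $2$, their differences dividing $3$), splitting off a factor indexed by $C_3=\mu_3(W(\mathbb{F}_4))$; and $a_1u=a_1'u'$ merely eliminates $a_1'$ on each piece. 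This yields $B'\otimes_B B'\simeq\prod_{C_3\times\hbox{Gal}(\mathbb{F}_4/\mathbb{F}_2)}B'$, so $\varphi$ is an \'etale torsor for a group of order $6$; tracing the second projection, the component at $(\zeta^j,\tau)$ realizes the automorphism $u\mapsto\zeta^j u$, $a_1\mapsto\zeta^{-j}a_1$, $\omega\mapsto\tau(\omega)$, where $\zeta$ is a primitive cube root of unity, and since Frobenius sends $\zeta$ to $\zeta^2=\zeta^{-1}$ these compose according to $C_3\rtimes\hbox{Gal}(\mathbb{F}_4/\mathbb{F}_2)$ with the Galois action by inversion.

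Finally I would match this $C_3$ with the asserted subgroup of $\mathbb{S}_2$. Its generator acts on $\hbox{Spf}\,W(\mathbb{F}_4)[[a_1]][u^{\pm 1}]\simeq\hbox{Spf}\,\pi_*E_2$ by $u\mapsto\zeta u$, $a_1\mapsto\zeta^{-1}a_1$ with $\zeta=[\omega]$ the Teichm\"uller lift; on the other hand the automorphism $(x,y)\mapsto(\zeta x,y)$ of the supersingular curve \eqref{ss} fixes $(0,0)$, acts on the formal coordinate $t=x/y$ by $t\mapsto\zeta t$ --- so it is the element $[\zeta]\in\mathbb{F}_4^\times\subset\mathbb{S}_2$ --- and, via Serre--Tate, induces precisely the above action on the universal deformation (scaling the invariant differential of \eqref{ssdeform}, hence $u$, by $\zeta$ and moving the parameter $a_1$ oppositely). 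Comparing the two descriptions gives the embedding $C_3=\mu_3(\mathbb{F}_4)=\mathbb{F}_4^\times\hookrightarrow\mathbb{S}_2$ through the $2$-adic Teichm\"uller character. I expect the ring-theoretic steps to be routine once \cite[Theorem 3.1]{HM} and \cite[Prop.\ 3.2]{MR} are granted; the one place needing genuine care is this last identification --- pinning down that it is $u$, rather than $a_1$, that the Teichm\"uller lift scales, and with the correct exponent --- which comes down to bookkeeping the invariant differential in \eqref{ssdeform} against the standard description of the action of $\mathbb{F}_{2^n}^\times\subset\mathbb{S}_n$ on $\pi_*E_n$.
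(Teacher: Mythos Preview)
Your proposal is correct and follows exactly the line the paper leaves implicit: the proposition is stated without proof, relying on the identification $\mathsf{Ell}_1(3)_2^{ss}\simeq\hbox{Spf}\,\mathbb{Z}_2[[a_1]][a_3^{\pm 1}]$ from \cite[Prop.~3.2]{MR} together with the $G_{24}\rtimes\hbox{Gal}$-torsor statement of \cite[Theorem 3.1]{HM}, and your explicit analysis of the ring map $a_1\mapsto a_1u$, $a_3\mapsto u^3$ is precisely the computation that fleshes this out. One small caution: the freeness claim is cleanest when read at the level of formal schemes (i.e.\ modulo each power of $(2,a_1)$), since the substitution $A_1=a_1u$ is an automorphism of $W(\mathbb{F}_4)[a_1,u^{\pm1}]/(2,a_1)^n$ but not literally of the Laurent-over-power-series rings themselves.
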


There is a $C_2$-\'etale map forgetting level structure, $$\mathsf{Ell}_1(3) \rightarrow \mathsf{Ell}_0(3)$$ $$ (C,P,\eta) \mapsto (C,\langle P \rangle, \eta).$$ The group $C_2$ acts on $\mathsf{Ell}_1(3)$ by $(C,P,\eta) \mapsto (C,-P,\eta)$.

\begin{prop}\label{TMF03}
The map $$\hbox{Spf}\,W(\mathbb{F}_4)[[a_1]][u^{\pm 1}] \rightarrow \mathsf{Ell}_0(3)_2^{ss}$$ classifying the point $(\widetilde{C}, \langle(0,0)\rangle, \frac{dx}{2y+a_1ux+u^3})$ is a presentation and an \'etale $C_2\times C_3 \rtimes \hbox{Gal}(\mathbb{F}_4/\mathbb{F}_2)$-torsor. The group $C_2$ is the normal subgroup of $Q_8\rtimes C_3$ generated by the formal inverse $[-1]_{C^{\wedge}}$.
\end{prop}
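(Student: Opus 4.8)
\emph{Proof idea.} The plan is to deduce this from Proposition~\ref{TMF13} by composing with the degree-two forgetful cover $\mathsf{Ell}_1(3)\to\mathsf{Ell}_0(3)$, and to read the Galois group off the action of $G_{24}\rtimes\hbox{Gal}(\mathbb{F}_4/\mathbb{F}_2)$ on the three-torsion of the supersingular curve $C$. Write $U=\hbox{Spf}\,W(\mathbb{F}_4)[[a_1]][u^{\pm 1}]$. By the presentation of $\mathsf{Ell}_2^{ss}$ recorded above (the restatement of $L_{K(2)}TMF=EO_2$) the classifying map $U\to\mathsf{Ell}_2^{ss}$ of $\widetilde{C}$ is an \'etale $(G_{24}\rtimes\hbox{Gal}(\mathbb{F}_4/\mathbb{F}_2))$-torsor, and $\mathsf{Ell}_0(3)_2^{ss}\to\mathsf{Ell}_2^{ss}$ is finite \'etale of degree $4$. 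Hence the composite $U\to\mathsf{Ell}_1(3)_2^{ss}\to\mathsf{Ell}_0(3)_2^{ss}$ is a composite of \'etale surjections, so a presentation; what remains is to identify its automorphism group $H$ with the subgroup of $G_{24}\rtimes\hbox{Gal}(\mathbb{F}_4/\mathbb{F}_2)$ stabilizing the $\Gamma_0(3)$-structure $\langle(0,0)\rangle$ on $\widetilde{C}$, and to compute that stabilizer.

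For the identification $\mathsf{Ell}_0(3)_2^{ss}\simeq[U/H]$ with $H=\hbox{Stab}\,\langle(0,0)\rangle$ it is enough that the cover of $\mathsf{Ell}_2^{ss}$ classifying $\Gamma_0(3)$-structures pulls back over the atlas $U$ to a constant sheaf carrying a single transitive $G_{24}\rtimes\hbox{Gal}(\mathbb{F}_4/\mathbb{F}_2)$-orbit. Now $\widetilde{C}[3]$ is finite \'etale over $U$, and the \'etale fundamental groups of $U$ and of its reduction $U_{\mathrm{red}}=\hbox{Spec}\,\mathbb{F}_4[u^{\pm 1}]$ agree; over $U_{\mathrm{red}}$ the curve $\widetilde{C}$ is $y^2+u^3y=x^3$, which is isomorphic to the constant curve $C$ over $\mathbb{F}_4[u^{\pm 1}]$ (rescale $x$ and $y$ by $u^2$ and $u^3$), and a direct count shows $|C(\mathbb{F}_4)|=9=|C[3]|$, so $C[3]$ is entirely $\mathbb{F}_4$-rational. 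Thus $\widetilde{C}[3]\cong(\mathbb{Z}/3)^2$ is constant over $U$, there are exactly four $\Gamma_0(3)$-structures, and $G_{24}$ permutes them transitively because it acts on the four lines of $\mathbb{P}^1(\mathbb{F}_3)$ through $G_{24}/\{\pm1\}=A_4$.

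To compute $H$: automorphisms of $C$ preserve the Weil pairing, so $\hbox{Aut}(C)$ acts on $C[3]\cong\mathbb{F}_3^2$ with determinant $1$, and this action is faithful, whence $G_{24}\cong SL(2,\mathbb{F}_3)$ by order count. Under this isomorphism the stabilizer of a line is the Borel subgroup of upper-triangular matrices, which equals $\langle -I\rangle\times C_3\cong C_2\times C_3$, the factor $\langle -I\rangle$ being the centre. The central involution $-I$ is $-1\in Q_8\subset Q_8\rtimes C_3=G_{24}$; it acts on $C$ as negation, hence restricts on $C^{\wedge}$ to the formal inverse $[-1]_{C^{\wedge}}$, so the $C_2$-factor is exactly the normal (indeed central) subgroup of $Q_8\rtimes C_3$ generated by $[-1]_{C^{\wedge}}$. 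Finally $\hbox{Gal}(\mathbb{F}_4/\mathbb{F}_2)$ fixes $\langle(0,0)\rangle$, since $(0,0)$ and its negative $(0,1)$ are $\mathbb{F}_2$-rational points of $C:x^3+y^2+y=0$. Therefore $H=(C_2\times C_3)\rtimes\hbox{Gal}(\mathbb{F}_4/\mathbb{F}_2)$, of order $12$, and $[G_{24}\rtimes\hbox{Gal}(\mathbb{F}_4/\mathbb{F}_2):H]=4$ matches the degree of the cover; the stabilizer of the \emph{point} $(0,0)$ inside $H$ is the index-two subgroup $C_3\rtimes\hbox{Gal}(\mathbb{F}_4/\mathbb{F}_2)$, recovering Proposition~\ref{TMF13} and exhibiting $[-1]_{C^{\wedge}}$ as the generator of the $C_2$-cover $\mathsf{Ell}_1(3)_2^{ss}\to\mathsf{Ell}_0(3)_2^{ss}$.

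The main obstacle is the middle step, namely $\mathsf{Ell}_0(3)_2^{ss}\simeq[U/H]$: that the $\Gamma_0(3)$-cover becomes a single constant transitive $(G_{24}\rtimes\hbox{Gal}(\mathbb{F}_4/\mathbb{F}_2))$-orbit over $U$. This hinges on the rationality count $C(\mathbb{F}_4)=C[3]$ and on the invariance of \'etale fundamental groups under passage to $U_{\mathrm{red}}$, both standard but essential. Everything afterwards is elementary group theory inside $GL(2,\mathbb{F}_3)$ --- already visible in the level-structure diagram of Section~2.3 --- together with the routine matching of the $C_2$-factor with $[-1]_{C^{\wedge}}$.
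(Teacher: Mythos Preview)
Your proposal is correct and follows the paper's own (implicit) route: the paper states Proposition~\ref{TMF03} immediately after noting that $\mathsf{Ell}_1(3)\to\mathsf{Ell}_0(3)$ is a $C_2$-\'etale cover with $C_2$ acting by $(C,P,\eta)\mapsto(C,-P,\eta)$, so its argument is simply ``compose Proposition~\ref{TMF13} with this $C_2$-cover and observe that $P\mapsto -P$ restricts to $[-1]_{C^{\wedge}}$.'' You carry out exactly this composition, but supply substantially more justification---the constancy of $\widetilde{C}[3]$ over $U$, the identification $G_{24}\cong SL(2,\mathbb{F}_3)$ via the faithful action on $3$-torsion, the Borel-subgroup computation, and the $\mathbb{F}_2$-rationality of $\langle(0,0)\rangle$---none of which the paper spells out.
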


The following diagram shows how the various modular curves at the supersingular locus in characterstic $2$ are related. All the arrows are finite \'etale and the labeled ones are Galois. Notation: $Gal = \hbox{Gal}(\mathbb{F}_4/\mathbb{F}_2)$.
$$\xymatrix{
&&\mathsf{Ell}(3)_2^{ss} \ar[d]^{\Gamma_1(3)} \ar@/^5pc/[ddd]^{GL(2,3)} \ar@/^2pc/[dd]^{\Gamma_0(3)} \\
&&\mathsf{Ell}_1(3)_2^{ss} \ar[d]^{C_2} \\
&&\mathsf{Ell}_0(3)_2^{ss} \ar[d] \\
\hbox{Spf}\,W(\mathbb{F}_4)[[a_1]][u^{\pm 1}] \ar[rr]_{G_{24}\rtimes Gal} \ar[urr]^{C_2\times C_3 \rtimes Gal} \ar@/^2pc/[uurr]^{C_3\rtimes Gal} &&\mathsf{Ell}_2^{ss} 
}$$

As a consequence there is a diagram of $K(2)$-local elliptic spectra.

$$\xymatrix{
L_{K(2)}TMF_1(3)  \ar[r]^{\simeq} &E_2^{h(C_3\rtimes Gal)}\\
L_{K(2)}TMF_0(3) \ar[u] \ar[r]^{\simeq} &E_2^{h(C_2 \times C_3 \rtimes Gal)}\ar[u]\\
L_{K(2)}TMF \ar[u] \ar[r]^{\simeq} &E_2^{h(\hat{A}_4 \rtimes Gal)} \ar[u]
}$$

Here $E_2$ is the Hopkins-Miller Morava $E$-theory spectrum associated to the deformation space $\hbox{Def}(C,\mathbb{F}_4)$.

\subsection{The structure of $\mathcal{M}_{E(2)}$ near the height $2$ point}

Let the point $f:\hbox{Spec}\,\mathbb{F}_4 \rightarrow \mathcal{U}^3$ classify the formal group $C^{\wedge}$ associated to the supersingular elliptic curve $C:y^2+y+x^3=0$. Let $\widetilde{C}$ denote the universal deformation of the curve $C$ over $\hbox{Spf}\,W(\mathbb{F}_4)[[a_1]][u^{\pm 1}]$ as in equation (\ref{ssdeform}).

Since $\mathcal{U}^3$ denotes the moduli stack of formal groups of height at most $2$ we know from Prop. \ref{En} there is a faithfully flat presentation $$\hbox{Spec}\,\mathbb{Z}_{(2)}[v_1,v_2^{\pm 1}] \rightarrow \mathcal{U}^3.$$ 

As a consequence there exists an extension $K$ of $\mathbb{F}_4$ such that $$f: \hbox{Spf}\,W(K)[[a_1]][u^{\pm 1}] \rightarrow \hbox{Spf}\,W(\mathbb{F}_4)[[a_1]][u^{\pm 1}]$$ is an fpqc cover and the pullback $f^*(\widetilde{C}^{\wedge})$ is isomorphic to a $2$-typical formal group law $\psi: \hbox{Spf}\,W(K)[[a_1]][u^{\pm 1}] \rightarrow \hbox{Spec}\,\mathbb{Z}_{(2)}[v_1,v_2^{\pm 1}]$. $$\alpha: f^*(\widetilde{C}^{\wedge}) \simeq \psi$$

Consider the map
$$\xymatrix{
\hbox{Spf}\,W(K)[[a_1]][u^{\pm 1}] \ar[rr]^{(f^*(\widetilde{C}^{\wedge}), \alpha, \psi)} &&\widehat{\mathcal{M}}^2\times_{\mathcal{M}} \mathcal{M}_{E(2)}}$$ classifying the formal group $f^*(\widetilde{C}^{\wedge})$, $\psi$ and the isomorphism $\alpha$. This map is a surjection and an \'etale presentation. 

The Teichm\"uller character map $\mathbb{F}_4^{\times} \rightarrow \mathbb{Z}_2^{\times}$ lifts to give an embedding $\mathbb{F}_4^{\times} \subset \hbox{Aut}(f^*(C^{\wedge}))$. For a $\omega \in \mathbb{F}_4^{\times}$ the linear formal power series $g(t) = \omega t$ gives an element of $\mathbb{S}_2$. Since the action of $\hbox{Aut}(f^*(C^{\wedge}))$ extends to an action on the deformation space $\hbox{Spf}\,W(K)[[a_1]][u^{\pm 1}]$, we obtain an action of $g$ on the ring $W(K)[[a_1]][u^{\pm 1}]$ by $$g(u) = \omega u$$ $$g(ua_1) = \omega^2ua_1$$ which leaves $v_1$ and $v_2$ invariant. Moreover these are the only elements of $\mathbb{S}_2$ which acts invariantly on $v_1$ and $v_2$. Since $\widehat{\mathcal{M}}^2 \times_{\mathcal{M}} \hbox{Spec}\,\mathbb{Z}_{(2)}[v_1,v_2^{\pm 1}] \simeq \hbox{Spf}\,\mathbb{Z}_2[[v_1]][v_2^{\pm 1}]$ we can identify the pullback in the following diagram of stacks
$$\xymatrix{
\hbox{Spf}\,W(K)[[a_1]][u^{\pm 1}] \times (C_3 \rtimes \hbox{Gal}(K,\mathbb{F}_2)) \ar[d] \ar[r] &\hbox{Spf}\,W(K)[[a_1]][u^{\pm 1}] \ar[d]\\
\hbox{Spf}\,W(K)[[a_1]][u^{\pm 1}] \ar[r] &\widehat{\mathcal{M}}^2\times_{\mathcal{M}} \mathcal{M}_{E(2)}}$$ 
 
Therefore, the map $\hbox{Spf}\,W(K)[[a_1]][u^{\pm 1}] \rightarrow \widehat{\mathcal{M}}^2 \times_{\mathcal{M}} \hbox{Spec}\,\mathbb{Z}_{(2)}[v_1,v_2^{\pm 1}]$ is an \'etale $C_3 \rtimes \hbox{Gal}(K,\mathbb{F}_2)$-torsor. The conjugation action of $\hbox{Gal}(K/\mathbb{F}_2)$ on $C_3$ factors through the quotient $\hbox{Gal}(K/\mathbb{F}_2) \rightarrow \hbox{Gal}(\mathbb{F}_4/\mathbb{F}_2)$ since every automorphism of $\widetilde{C}^{\wedge}$ is already defined over $\mathbb{F}_4$. In terms of $K(2)$-localization, $$L_{K(2)}E(2) = E_2^{hH}$$ where $H = \mathbb{F}_4^{\times} \rtimes \hbox{Gal}(K/\mathbb{F}_2)$ and $E_2$ is the Hopkins-Miller spectrum associated to $\hbox{Def}(C^{\wedge},K)$.

More generally, we can use the maximal unramified extension $E_2^{nr}$, which is the Hopkins-Miller spectrum associated to the deformation space $\hbox{Def}(C,\overline{\mathbb{F}}_2)$. The extended Morava stabilizer group in this case is $\mathbb{S}_2 \rtimes \hbox{Gal}(\overline{\mathbb{F}}_2/\mathbb{F}_2) = \mathbb{S}_2 \rtimes \hat{\mathbb{Z}}$. 

\begin{prop}\label{E2}
There is a map of stacks $\hbox{Def}(C,\overline{\mathbb{F}}_2) \rightarrow \widehat{\mathcal{M}}^2 \times_{\mathcal{M}} \mathcal{M}_{E(2)}$ which is a surjection and an \'etale $C_3 \rtimes \hat{\mathbb{Z}}$-torsor. 
\end{prop}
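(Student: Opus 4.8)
The plan is to bootstrap Proposition \ref{E2} from the finite-field statement already established in the preceding discussion, namely that for a suitable finite extension $K/\mathbb{F}_4$ the map $\hbox{Spf}\,W(K)[[a_1]][u^{\pm 1}] \rightarrow \widehat{\mathcal{M}}^2 \times_{\mathcal{M}} \mathcal{M}_{E(2)}$ is an \'etale $C_3 \rtimes \hbox{Gal}(K/\mathbb{F}_2)$-torsor, and then to pass to the limit over all finite $K$ containing $\mathbb{F}_4$. First I would note that $\hbox{Def}(C,\overline{\mathbb{F}}_2) = \varprojlim_K \hbox{Def}(C,K) = \varprojlim_K \hbox{Spf}\,W(K)[[a_1]][u^{\pm 1}]$, the transition maps being the evident base-change maps, which are themselves pro-\'etale covers with Galois group $\hbox{Gal}(K'/K)$ (this is precisely the maximal unramified cover construction of Section 2.4 applied to the deformation space, together with Serre-Tate theory identifying $\hbox{Def}(C,k)\simeq\hbox{Def}(\widehat{C},k)$). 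Each finite stage maps to $\widehat{\mathcal{M}}^2\times_{\mathcal{M}}\mathcal{M}_{E(2)}$ compatibly, so there is an induced map from the limit; the content is to check it is an \'etale $C_3\rtimes\hat{\mathbb{Z}}$-torsor.

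The second step is to identify the Galois group in the limit. At the finite stage $K$, the discussion before the proposition shows the torsor group is $C_3 \rtimes \hbox{Gal}(K/\mathbb{F}_2)$, where the $C_3$ is the image of $\mathbb{F}_4^\times$ under the Teichm\"uller lift and the semidirect-product (conjugation) action of $\hbox{Gal}(K/\mathbb{F}_2)$ on $C_3$ factors through $\hbox{Gal}(\mathbb{F}_4/\mathbb{F}_2)$ because every automorphism of $\widetilde{C}^\wedge$ is defined over $\mathbb{F}_4$. Taking the inverse limit over $K$, the Galois factors assemble to $\varprojlim_K \hbox{Gal}(K/\mathbb{F}_2) = \hbox{Gal}(\overline{\mathbb{F}}_2/\mathbb{F}_2) = \hat{\mathbb{Z}}$, the $C_3$ is unchanged, and the conjugation action is still the one factoring through $\hbox{Gal}(\mathbb{F}_4/\mathbb{F}_2)$; hence the limiting group is $C_3 \rtimes \hat{\mathbb{Z}}$. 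Surjectivity (as a map of stacks) is inherited from surjectivity at each finite stage since $\hbox{Def}(C,\overline{\mathbb{F}}_2)\to\hbox{Def}(C,K)$ is faithfully flat.

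Finally I would verify that ``\'etale torsor'' is preserved under this limit. Here the relevant statement is that $\hbox{Def}(C,\overline{\mathbb{F}}_2)\to\hbox{Def}(C,K)$ is a pro-\'etale $\hbox{Gal}(\overline{\mathbb{F}}_2/K)$-torsor, so composing with the finite-stage \'etale $C_3\rtimes\hbox{Gal}(K/\mathbb{F}_2)$-torsor and using the extension of profinite groups $1\to\hbox{Gal}(\overline{\mathbb{F}}_2/K)\to C_3\rtimes\hbox{Gal}(\overline{\mathbb{F}}_2/\mathbb{F}_2)\to C_3\rtimes\hbox{Gal}(K/\mathbb{F}_2)\to 1$ exhibits the composite as a pro-\'etale $C_3\rtimes\hat{\mathbb{Z}}$-torsor; the $C_3$ part is finite \'etale, so the only non-finite part of the structure group is the unramified $\hat{\mathbb{Z}}$, exactly as in the $E_2^{nr}$ description. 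Concretely this amounts to checking that the diagonal pullback square computing $\hbox{Def}(C,\overline{\mathbb{F}}_2)\times_{\widehat{\mathcal{M}}^2\times_{\mathcal{M}}\mathcal{M}_{E(2)}}\hbox{Def}(C,\overline{\mathbb{F}}_2)$ is $\hbox{Def}(C,\overline{\mathbb{F}}_2)\times(C_3\rtimes\hat{\mathbb{Z}})$, which follows by taking the limit of the corresponding finite-$K$ squares and using that cofiltered limits of affine (formal) schemes commute with fiber products. The main obstacle I anticipate is purely bookkeeping: making the compatibility of the torsor structures across the tower $W(K)[[a_1]][u^{\pm1}]$ precise in the category of formal Deligne-Mumford stacks, i.e. checking that the inverse limit of the presentations is still a presentation (faithfully flat and locally of finite presentation in the appropriate pro-sense) and that no extra automorphisms appear in the limit beyond $C_3\rtimes\hat{\mathbb{Z}}$ — the latter is guaranteed by the ``only these elements of $\mathbb{S}_2$ act invariantly on $v_1,v_2$'' rigidity already invoked at the finite stage.
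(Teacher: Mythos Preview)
Your proposal is correct and follows essentially the same route as the paper. The paper does not give a separate formal proof of Proposition~\ref{E2}; the argument is the paragraph preceding it, which establishes the finite-$K$ torsor statement and then passes to the maximal unramified extension via the remark ``More generally, we can use the maximal unramified extension $E_2^{nr}$\ldots''. Your write-up simply makes that limit passage explicit, and your bookkeeping (identifying $\hat{\mathbb{Z}}=\varprojlim_K\hbox{Gal}(K/\mathbb{F}_2)$, checking the diagonal pullback in the limit, noting that the conjugation action on $C_3$ still factors through $\hbox{Gal}(\mathbb{F}_4/\mathbb{F}_2)$) is exactly what is being invoked implicitly.
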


The maximal unramified cover of the deformation space $\hbox{Def}(C,\mathbb{F}_4)$ can be identified with $\hbox{Def}(C,\overline{\mathbb{F}}_2)$. There is a natural map of stacks (we use the notation \quo for quotient stacks)
\begin{equation}\label{mapnr}
\hbox{Def}(C,\overline{\mathbb{F}}_2) \rightarrow \hbox{Def}(C,\mathbb{F}_4)\quo \hbox{Gal}(\mathbb{F}_4/\mathbb{F}_2),
\end{equation}
which realizes as a $K(n)$-local $\hat{\mathbb{Z}}$-Galois extension of $S$-algebras $$E_2^{Gal} \rightarrow E_2^{nr}.$$ The $\hat{\mathbb{Z}}$-extension is obtained by adjoining all the roots of unity of order prime to $p$ to the $p$-complete spectrum $E_2^{Gal}$ (\cite[5.4.6]{Rognes}).

The map (\ref{mapnr}) factors through the quotient $$\hbox{Def}(C,\overline{\mathbb{F}}_2)\quo C_3 \rightarrow \hbox{Def}(C, \mathbb{F}_4)\quo \widehat{A}_4 \rtimes \hbox{Gal}(\mathbb{F}_4/\mathbb{F}_2).$$ 

In terms of $K(2)$-localization there is a map 

\begin{equation}\label{map1}
L_{K(2)}TMF \simeq (E_2)^{\widehat{A}_4 \rtimes Gal} \rightarrow (E_2^{nr})^{hC_3} \simeq L_{K(2)}E(2)(\mu_{\infty,2}).
\end{equation} 
  
Combining Prop. \ref{TMF13} and Prop. \ref{E2} proves the following.
\begin{prop}\label{TMF1E2}
There are equivalences of stacks over $\widehat{\mathcal{M}}^2$.
$$\hbox{Def}(C,\overline{\mathbb{F}}_2)\quo C_3 \simeq {\widehat{\mathcal{M}}^2 \times_{\mathcal{M}} \mathcal{M}_{E(2)} }^{nr} \simeq \left( {\mathsf{Ell}_1(3)}_2^{ss} \right)^{nr}.$$
\end{prop}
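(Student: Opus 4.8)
The plan is to realize both stacks on the right-hand side as the quotient of $\hbox{Def}(C,\overline{\mathbb{F}}_2)$ by one and the \emph{same} $C_3$-action, namely the action through the Teichm\"uller lift $\mathbb{F}_4^{\times}\hookrightarrow W(\mathbb{F}_4)^{\times}$ that appears in both Prop.~\ref{TMF13} and Prop.~\ref{E2}. Before starting I would record that every map in sight is a map of stacks over $\widehat{\mathcal{M}}^2$: the classifying map $\hbox{Def}(C,\overline{\mathbb{F}}_2)\to\widehat{\mathcal{M}}^2$ is $\mathbb{S}_2$-invariant since $\mathbb{S}_2$, and in particular $C_3$, acts by automorphisms of the universal formal group, and the projections from $\widehat{\mathcal{M}}^2\times_{\mathcal{M}}\mathcal{M}_{E(2)}$ and from $\mathsf{Ell}_1(3)_2^{ss}$ (through the formal group of the elliptic curve) are over $\widehat{\mathcal{M}}^2$ by construction. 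So it suffices to produce the two equivalences compatibly with the $C_3$-quotient.

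For the first equivalence I would start from Prop.~\ref{E2}: the map $\hbox{Def}(C,\overline{\mathbb{F}}_2)\to\widehat{\mathcal{M}}^2\times_{\mathcal{M}}\mathcal{M}_{E(2)}$ is a pro-\'etale torsor for $C_3\rtimes\hat{\mathbb{Z}}$ with $\hat{\mathbb{Z}}=\hbox{Gal}(\overline{\mathbb{F}}_2/\mathbb{F}_2)$. Since $C_3$ is normal in this semidirect product, dividing out by $C_3$ produces a pro-\'etale torsor
$$\hbox{Def}(C,\overline{\mathbb{F}}_2)\quo C_3 \longrightarrow \widehat{\mathcal{M}}^2\times_{\mathcal{M}}\mathcal{M}_{E(2)}$$
for the quotient group $\hat{\mathbb{Z}}$. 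The residual $\hat{\mathbb{Z}}$-action is precisely the Galois action relating deformations over $\overline{\mathbb{F}}_2$ to deformations over $\mathbb{F}_2$, and $\widehat{\mathcal{M}}^2\times_{\mathcal{M}}\mathcal{M}_{E(2)}$ is already defined over $\mathbb{Z}_2$ (compare $\widehat{\mathcal{M}}^2\times_{\mathcal{M}}\hbox{Spec}\,\mathbb{Z}_{(2)}[v_1,v_2^{\pm1}]\simeq\hbox{Spf}\,\mathbb{Z}_2[[v_1]][v_2^{\pm1}]$); hence this torsor is its maximal unramified cover, giving $\hbox{Def}(C,\overline{\mathbb{F}}_2)\quo C_3\simeq{\widehat{\mathcal{M}}^2\times_{\mathcal{M}}\mathcal{M}_{E(2)} }^{nr}$ over $\widehat{\mathcal{M}}^2$.

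For the second equivalence I would run the same argument with Prop.~\ref{TMF13}. There $\hbox{Def}(C,\mathbb{F}_4)\to\mathsf{Ell}_1(3)_2^{ss}$ is an \'etale $C_3\rtimes\hbox{Gal}(\mathbb{F}_4/\mathbb{F}_2)$-torsor; composing with the pro-\'etale $\hbox{Gal}(\overline{\mathbb{F}}_2/\mathbb{F}_4)$-cover $\hbox{Def}(C,\overline{\mathbb{F}}_2)\to\hbox{Def}(C,\mathbb{F}_4)$ and assembling the Galois data along $1\to\hbox{Gal}(\overline{\mathbb{F}}_2/\mathbb{F}_4)\to\hbox{Gal}(\overline{\mathbb{F}}_2/\mathbb{F}_2)\to\hbox{Gal}(\mathbb{F}_4/\mathbb{F}_2)\to1$ exhibits $\hbox{Def}(C,\overline{\mathbb{F}}_2)\to\mathsf{Ell}_1(3)_2^{ss}$ as a pro-\'etale $C_3\rtimes\hat{\mathbb{Z}}$-torsor with the same $C_3$. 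Dividing by $C_3$ and identifying the residual $\hat{\mathbb{Z}}$-torsor with the maximal unramified cover of $\mathsf{Ell}_1(3)_2^{ss}=\hbox{Spf}\,\mathbb{Z}_2[[a_1]][a_3^{\pm1}]$ exactly as above gives $\hbox{Def}(C,\overline{\mathbb{F}}_2)\quo C_3\simeq\left( {\mathsf{Ell}_1(3)}_2^{ss} \right)^{nr}$ over $\widehat{\mathcal{M}}^2$. Concatenating the two identifications proves the proposition.

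The step I expect to require the most care is the passage from ``torsor for $C_3\rtimes\hat{\mathbb{Z}}$'' to ``maximal unramified cover of the $C_3$-quotient'': one must verify that the residual $\hat{\mathbb{Z}}$-action really is the geometric Frobenius action witnessing base change from $\mathbb{Z}_2$ to $W(\overline{\mathbb{F}}_2)$, rather than some unrelated $\hat{\mathbb{Z}}$-torsor, and that the two semidirect products $C_3\rtimes\hat{\mathbb{Z}}$ coming from Prop.~\ref{E2} and Prop.~\ref{TMF13} genuinely coincide --- in both cases $\hat{\mathbb{Z}}$ acts on $C_3=\mathbb{F}_4^{\times}$ through the surjection $\hat{\mathbb{Z}}\twoheadrightarrow\hbox{Gal}(\mathbb{F}_4/\mathbb{F}_2)=\hbox{Aut}(C_3)$, because every automorphism of $\widetilde{C}$, and hence of $\widetilde{C}^{\wedge}$, is already defined over $\mathbb{F}_4$. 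Once these compatibilities are pinned down, the remainder is formal descent along the pro-\'etale $C_3$-torsor.
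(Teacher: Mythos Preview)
Your proposal is correct and follows exactly the approach the paper intends: the paper's own proof is the single sentence ``Combining Prop.~\ref{TMF13} and Prop.~\ref{E2} proves the following,'' and what you have written is a careful unpacking of that combination, including the identification of the residual $\hat{\mathbb{Z}}$-torsor with the maximal unramified cover and the compatibility of the two $C_3\rtimes\hat{\mathbb{Z}}$-actions.
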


\subsection{The structure of $\mathcal{M}_{ER(2)}$ near the height $2$ point}

\begin{prop}
The map of stacks $$\widehat{\mathcal{M}}^2 \times_{\mathcal{M}} \mathcal{M}_{E(2)} \rightarrow \widehat{\mathcal{M}}^2 \times_{\mathcal{M}} \mathcal{M}_{ER(2)}$$ is a surjection and an \'etale $C_2$-torsor.
\end{prop}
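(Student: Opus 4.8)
The plan is to identify the $C_2$-action on $\widehat{\mathcal{M}}^2 \times_{\mathcal{M}} \mathcal{M}_{E(2)}$ coming from the homotopy fixed points description $ER(2) \to E(2)$ and match it with the action of the formal inverse $[-1]_{\Gamma_2}$ on the deformation space. First I would recall, via Averett's result (equation~\eqref{Averett}) together with Proposition~\ref{E2}, that $L_{K(2)}ER(2)^{nr} \simeq (E_2^{nr})^{h(C_2 \times C_3)}$ where the $C_2$ is generated by the central element $[-1]_{\Gamma_2} \in \mathbb{S}_2$. The stack $\mathcal{M}_{ER(2)}$ is, by the general machinery of Section~2, built from the cosimplicial spectrum $BP^{\wedge\bullet+1}\wedge ER(2)$; after pulling back to $\widehat{\mathcal{M}}^2$ and taking the unramified cover this becomes the stack associated to the homotopy fixed point spectral sequence input, i.e.\ a quotient stack. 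Concretely, I would argue that $\widehat{\mathcal{M}}^2 \times_{\mathcal{M}} \mathcal{M}_{ER(2)}^{nr} \simeq \hbox{Def}(C,\overline{\mathbb{F}}_2)\quo (C_2 \times C_3)$, with the map from $\widehat{\mathcal{M}}^2 \times_{\mathcal{M}} \mathcal{M}_{E(2)}^{nr} \simeq \hbox{Def}(C,\overline{\mathbb{F}}_2)\quo C_3$ being the evident quotient by the residual $C_2$.

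Next I would verify the two claims: surjectivity and the étale $C_2$-torsor property. Surjectivity is formal once we have the quotient-stack description, since the quotient map by any finite group is surjective. For the torsor property, the key point is that $[-1]_{\Gamma_2}$ is central in $\mathbb{S}_2$ and acts \emph{freely} on $\hbox{Def}(C,\overline{\mathbb{F}}_2)$: a formal group law of height $2$ over a $2$-adic ring has $[-1](t) = -t + \cdots \ne t$, so no lift of a deformation is fixed by the involution, and moreover the $C_2$-action commutes with the $C_3$-action so it descends to a free action on $\hbox{Def}(C,\overline{\mathbb{F}}_2)\quo C_3$. Freeness of a finite group action on a Deligne--Mumford stack, together with the action being by automorphisms of the deformation space (hence by isomorphisms of formal affine schemes), gives that the quotient map is a finite étale $C_2$-torsor. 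Equivalently, in terms of ring spectra, this is the statement that $(E_2^{nr})^{hC_3} \to \big((E_2^{nr})^{hC_3}\big)^{hC_2}$ is a $K(2)$-local $C_2$-Galois extension, which follows from Averett's theorem and the fact that finite Galois extensions are preserved under taking homotopy fixed points by commuting actions.

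The main obstacle I expect is making precise the passage from the topological homotopy-fixed-point construction $ER(2) = E(2)^{hC_2}$ to the \emph{algebro-geometric} statement that $\mathcal{M}_{ER(2)}$, pulled back to $\widehat{\mathcal{M}}^2$ and taken unramified, is the quotient stack by $[-1]_{\Gamma_2}$. This requires knowing that the stack construction $X \mapsto \mathcal{M}_X$ sends the homotopy fixed point spectrum to the quotient stack — which is not automatic because $ER(2)$ is merely homotopy commutative and one must work $K(2)$-locally where completion makes it $E_\infty$ — and that Lemma~\ref{smash}'s flatness hypothesis applies, so that $\widehat{\mathcal{M}}^2 \times_{\mathcal{M}} \mathcal{M}_{ER(2)}$ really is computed by the relevant descent. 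I would handle this by working entirely $K(2)$-locally and over the unramified base, where $E_2^{nr}$ and its finite homotopy fixed points are all $E_\infty$-rings with flat maps to $\widehat{\mathcal{M}}^2$, so that the cosimplicial resolutions compute the quotient stacks as claimed; the freeness of the $[-1]_{\Gamma_2}$-action then upgrades this descent datum to an honest torsor.
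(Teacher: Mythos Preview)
Your proposal is correct in spirit but takes a considerably more elaborate route than the paper. The paper's proof is two lines: Averett's formula~\eqref{Averett} identifies $L_{K(2)}ER(2) \simeq E_2^{h(C_2\times H(2))}$ and $L_{K(2)}E(2) \simeq E_2^{hH(2)}$, and then \cite[Theorem 5.4.4]{Rognes} (intermediate Galois extensions from homotopy fixed points) immediately gives that $L_{K(2)}ER(2) \to L_{K(2)}E(2)$ is a $K(2)$-local $C_2$-Galois extension of commutative $S$-algebras. The passage to the stack statement is left implicit. You do mention this argument, but only as an ``equivalently'' at the end of your second paragraph; in the paper it \emph{is} the proof.

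Your main line of argument instead goes through the explicit quotient-stack descriptions $\hbox{Def}(C,\overline{\mathbb{F}}_2)\quo C_3$ and $\hbox{Def}(C,\overline{\mathbb{F}}_2)\quo(C_2\times C_3)$ and checks freeness of $[-1]$. This is valid, but note that in the paper's logical order the quotient-stack description on the $ER(2)$ side (Propositions~\ref{ER2} and~\ref{TMF0ER2}) is \emph{deduced from} the present proposition, not used to prove it. Moreover you work throughout over the unramified extension, whereas the proposition as stated does not pass to $(-)^{nr}$; the paper's direct Galois argument avoids this discrepancy. Finally, the ``main obstacle'' you flag---justifying that $\mathcal{M}_{(-)}$ takes the homotopy fixed-point construction to the quotient stack---is precisely what the paper sidesteps by invoking Rognes' Galois formalism directly at the level of ring spectra. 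Your approach buys a more hands-on geometric picture; the paper's buys brevity and a cleaner dependency graph.
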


\begin{proof}
Combining Averett's formula (\ref{Averett}) and \cite[Theorem 5.4.4]{Rognes} we see that the map of ring spectra $L_{K(2)}ER(2) \rightarrow L_{K(2)}E(2)$ is a $K(2)$-local $C_2$-Galois extension of $2$-complete commutative $S$-algebras.
\end{proof}

Given the description of $\widehat{\mathcal{M}}^2 \times_{\mathcal{M}} \mathcal{M}_{E(2)}$ as the quotient stack $\hbox{Def}(C,\overline{\mathbb{F}}_2)\quo C_3\rtimes\hat{\mathbb{Z}}$, the action of $C_2$ must come from the inverse map $[-1]_C$ on the curve $C$. 

%$C_2 =\langle[-1]_{C^{\wedge}}\rangle$ is a normal subgroup of $ \widehat{A}_4 \rtimes \hbox{Gal}(\mathbb{F}_4/\mathbb{F}_2)$, the maximal finite subgroup of the extended stabilizer group $\mathbb{S}_2 \rtimes \hbox{Gal}(\mathbb{F}_4/\mathbb{F}_2)$.

The induced action on the deformation space is as follows: $$\xymatrix{[-1]_C: W(\mathbb{F}_4)[[a_1]][u^{\pm 1}] \ar[r] & W(\mathbb{F}_4)[[a_1]][u^{\pm 1}]}$$ $$[-1]_C(u)=-u$$ $$[-1]_C(ua_1)=ua_1$$

\begin{prop}\label{ER2}
There is a map of stacks $\hbox{Def}(C,\overline{\mathbb{F}}_2) \rightarrow \widehat{\mathcal{M}}^2 \times_{\mathcal{M}} \mathcal{M}_{ER(2)}$ which is an \'etale surjection and a $C_2 \times C_3 \rtimes \hat{\mathbb{Z}}$-torsor. The $C_2$-action on $\hbox{Def}(C,\overline{\mathbb{F}}_2)$ comes from the inverse $[-1]_C$.
\end{prop}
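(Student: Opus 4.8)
The plan is to assemble Proposition~\ref{ER2} from the two structural results already in hand: Proposition~\ref{E2}, which exhibits $\hbox{Def}(C,\overline{\mathbb{F}}_2) \rightarrow \widehat{\mathcal{M}}^2 \times_{\mathcal{M}} \mathcal{M}_{E(2)}$ as an \'etale $C_3 \rtimes \hat{\mathbb{Z}}$-torsor, and the preceding Proposition, which says that $\widehat{\mathcal{M}}^2 \times_{\mathcal{M}} \mathcal{M}_{E(2)} \rightarrow \widehat{\mathcal{M}}^2 \times_{\mathcal{M}} \mathcal{M}_{ER(2)}$ is an \'etale $C_2$-torsor. First I would compose the two maps to get an \'etale surjection $\hbox{Def}(C,\overline{\mathbb{F}}_2) \rightarrow \widehat{\mathcal{M}}^2 \times_{\mathcal{M}} \mathcal{M}_{ER(2)}$; since a composite of \'etale surjections is an \'etale surjection, this part is immediate. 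The content is then to identify the automorphism group of this composite presentation, i.e.\ to show the composite is a torsor under a group that is an extension of $C_3 \rtimes \hat{\mathbb{Z}}$ by $C_2$, and that this extension is the split one $C_2 \times C_3 \rtimes \hat{\mathbb{Z}}$ with the $C_2$ acting via $[-1]_C$.

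The key steps, in order: (1) Observe that torsor structures compose, so the composite is a torsor under a group $G$ sitting in a short exact sequence $1 \rightarrow C_2 \rightarrow G \rightarrow C_3 \rtimes \hat{\mathbb{Z}} \rightarrow 1$, where the $C_2$ is the deck group of the $E(2)$-over-$ER(2)$ cover. (2) Identify this $C_2$ concretely: by Averett's equivalence~(\ref{Averett}) the $C_2$-action realizing $ER(2) \rightarrow E(2)$ is the one generated by the central element $[-1]_{\Gamma_2} = [-1]_{C^{\wedge}} \in \mathbb{S}_2$, which by Serre--Tate corresponds to the inverse automorphism $[-1]_C$ of the elliptic curve; its action on the deformation space $W(\mathbb{F}_4)[[a_1]][u^{\pm 1}]$ is $u \mapsto -u$, $ua_1 \mapsto ua_1$, as recorded just before the statement. (3) Check that this $C_2$ is central in $G$: since $[-1]_{C^{\wedge}}$ is central in $\mathbb{S}_2$ it commutes with the $C_3 = \mathbb{F}_4^{\times}$ inside the stabilizer, and it is Galois-equivariant because $[-1]_C$ is defined over $\mathbb{F}_2$; hence the extension is central, and because $\hbox{Hom}(C_3 \rtimes \hat{\mathbb{Z}}, C_2)$ together with the relevant $H^2$ forces it to split (alternatively, $C_2$ and $C_3$ have coprime order and the $C_2$ manifestly commutes with a lift of $\hat{\mathbb{Z}}$), we get $G \simeq C_2 \times (C_3 \rtimes \hat{\mathbb{Z}})$. (4) Conclude the torsor statement and note the identification of the $C_2$-factor with $\langle [-1]_C\rangle$, consistent with Proposition~\ref{TMF03}.

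The main obstacle is step (3): rather than a formal group-theoretic manipulation one must genuinely invoke that the $C_2$ appearing in $\mathcal{M}_{E(2)} \rightarrow \mathcal{M}_{ER(2)}$ is the \emph{specific} involution $[-1]_{\Gamma_2}$, and that this element is simultaneously (a) the image under Serre--Tate of $[-1]_C$, and (b) central in the extended stabilizer and trivial on the Galois direction. Points (a) and (b) are essentially contained in the discussion around~(\ref{Averett}) and in the explicit action formulas preceding the statement, so the argument is short; but it is the step where the geometric input (Serre--Tate matching formal and elliptic inversion) and the homotopical input (Averett's identification of the Galois $C_2$) are combined, and it is worth spelling out that the two $C_2$'s agree before asserting the extension splits. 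Once centrality and splitting are in place, the remaining assertions --- \'etale, surjective, $C_2 \times C_3 \rtimes \hat{\mathbb{Z}}$-torsor --- follow formally from Propositions~\ref{E2} and the preceding one.
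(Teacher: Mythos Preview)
Your proposal is correct and follows the same approach as the paper: compose the $C_3\rtimes\hat{\mathbb{Z}}$-torsor of Proposition~\ref{E2} with the $C_2$-torsor of the preceding Proposition, then identify the resulting group using Averett's formula~(\ref{Averett}) to see that the $C_2$ is the central $[-1]_{C^\wedge}=[-1]_C$. In fact the paper's ``proof'' is just the paragraph of discussion preceding the statement (the quotient-stack description and the explicit action formulas for $[-1]_C$ on $u$ and $ua_1$); you have spelled out the extension-splitting step (centrality of $[-1]_{C^\wedge}$ in $\mathbb{S}_2$, definedness over $\mathbb{F}_2$, coprimality of $|C_2|$ and $|C_3|$) more carefully than the paper does.
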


This produces a map $$\hbox{Def}(C,\overline{\mathbb{F}}_2)\quo C_2 \times C_3 \rightarrow \hbox{Def}(C, \mathbb{F}_4)\quo \widehat{A}_4 \rtimes \hbox{Gal}(\mathbb{F}_4/\mathbb{F}_2).$$

In terms of $K(2)$-localization the map of spectra (\ref{map1}) factors through the real part.

\begin{equation}\label{map2}
L_{K(2)}TMF \rightarrow (E_2^{nr})^{h(C_2\times C_3)} \simeq L_{K(2)}ER(2)(\mu_{\infty,2}) \rightarrow L_{K(2)}E(2)(\mu_{\infty,2}).
\end{equation}

This proves the first part of Theorem \ref{restate}. Combining Prop. \ref{TMF03} with Prop. \ref{ER2} proves the following which together with Prop. \ref{TMF1E2} proves the rest of Theorem \ref{restate}.

\begin{prop}\label{TMF0ER2}
There are equivalences of stacks over $\widehat{\mathcal{M}}^2$,
$$\hbox{Def}(C,\overline{\mathbb{F}}_2)\quo C_2 \times C_3 \simeq {\widehat{\mathcal{M}}^2 \times_{\mathcal{M}}\mathcal{M}_{ER(2)}}^{nr} \simeq \left({\mathsf{Ell}_0(3)}^{ss}_2\right)^{nr}.$$
\end{prop}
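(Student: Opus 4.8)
The plan is to show that both outer terms in the statement are equivalent, over $\widehat{\mathcal{M}}^2$, to the single quotient stack $\hbox{Def}(C,\overline{\mathbb{F}}_2)\quo(C_2\times C_3)$, and then to check that the two $C_2\times C_3$-actions producing these quotients agree. Granting Propositions \ref{ER2} and \ref{TMF03}, the first halves of this are mostly bookkeeping with faithfully flat descent and with the maximal unramified cover; the substantive point is the comparison of group actions at the end.

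First I would handle the middle term. Proposition \ref{ER2} presents $\hbox{Def}(C,\overline{\mathbb{F}}_2)\to\widehat{\mathcal{M}}^2\times_{\mathcal{M}}\mathcal{M}_{ER(2)}$ as an \'etale surjection which is a $C_2\times C_3\rtimes\hat{\mathbb{Z}}$-torsor, so descent along it identifies $\widehat{\mathcal{M}}^2\times_{\mathcal{M}}\mathcal{M}_{ER(2)}$ with the quotient stack $\hbox{Def}(C,\overline{\mathbb{F}}_2)\quo(C_2\times C_3\rtimes\hat{\mathbb{Z}})$. Since $C_2\times C_3$ is normal in the semidirect product, this equals $(\hbox{Def}(C,\overline{\mathbb{F}}_2)\quo(C_2\times C_3))\quo\hat{\mathbb{Z}}$, where $\hat{\mathbb{Z}}=\hbox{Gal}(\overline{\mathbb{F}}_2/\mathbb{F}_2)$ acts only through its semilinear action on $W(\overline{\mathbb{F}}_2)$, every automorphism of $\widetilde{C}^{\wedge}$ being already defined over $\mathbb{F}_4$. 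Forming the maximal unramified cover $(-)^{nr}=(-)\times_{\mathbb{Z}_2}\hbox{Spf}\,W(\overline{\mathbb{F}}_2)$ undoes exactly this pro-\'etale $\hat{\mathbb{Z}}$-descent, so ${\widehat{\mathcal{M}}^2\times_{\mathcal{M}}\mathcal{M}_{ER(2)}}^{nr}\simeq\hbox{Def}(C,\overline{\mathbb{F}}_2)\quo(C_2\times C_3)$.

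Next I would handle the right-hand term by the same recipe, which is literally the computation that yields Proposition \ref{TMF1E2} from Propositions \ref{TMF13} and \ref{E2}. By Proposition \ref{TMF03}, $\hbox{Spf}\,W(\mathbb{F}_4)[[a_1]][u^{\pm1}]\to\mathsf{Ell}_0(3)^{ss}_2$ is an \'etale $C_2\times C_3\rtimes\hbox{Gal}(\mathbb{F}_4/\mathbb{F}_2)$-torsor, so descent gives $\mathsf{Ell}_0(3)^{ss}_2\simeq\hbox{Spf}\,W(\mathbb{F}_4)[[a_1]][u^{\pm1}]\quo(C_2\times C_3\rtimes\hbox{Gal}(\mathbb{F}_4/\mathbb{F}_2))$. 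Taking $(-)^{nr}$ commutes with quotients; and since $\mathbb{F}_4/\mathbb{F}_2$ is Galois, $W(\mathbb{F}_4)\otimes_{\mathbb{Z}_2}W(\overline{\mathbb{F}}_2)\cong W(\overline{\mathbb{F}}_2)\times W(\overline{\mathbb{F}}_2)$, so $(\hbox{Spf}\,W(\mathbb{F}_4)[[a_1]][u^{\pm1}])^{nr}$ is a disjoint union of two copies of $\hbox{Def}(C,\overline{\mathbb{F}}_2)$ (each carrying the base change of $\widetilde{C}$) that $\hbox{Gal}(\mathbb{F}_4/\mathbb{F}_2)$ permutes freely; collapsing that free action leaves one copy with its residual diagonal $C_2\times C_3$-action. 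Hence $(\mathsf{Ell}_0(3)^{ss}_2)^{nr}\simeq\hbox{Def}(C,\overline{\mathbb{F}}_2)\quo(C_2\times C_3)$.

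Finally I would match the two $C_2\times C_3$-actions. Via Serre--Tate one has $\hbox{Def}(C,\overline{\mathbb{F}}_2)\simeq\hbox{Def}(C^{\wedge},\overline{\mathbb{F}}_2)$, and in both constructions the finite group acts through the restriction to $C_2\times C_3\subset\mathbb{S}_2$ of the Lubin--Tate action of $\mathbb{S}_2$, which is compatible with the maps to $\widehat{\mathcal{M}}^2$; so it suffices to identify the two copies of $C_2\times C_3$ inside $\mathbb{S}_2$. In each case $C_2=\langle[-1]_{C^{\wedge}}\rangle$ is the central subgroup (Proposition \ref{ER2}, respectively the description of $C_2$ in Proposition \ref{TMF03}), and the $C_3$ is the Teichm\"uller lift of $\mathbb{F}_4^{\times}$ (Proposition \ref{ER2}, respectively the $C_3$ of Proposition \ref{TMF13} through which $\mathsf{Ell}_0(3)^{ss}_2$ is presented). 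Because $[-1]_{C^{\wedge}}$ is central and every order-$3$ subgroup of $\mathbb{S}_2$ is $\mathbb{S}_2$-conjugate, conjugation by a suitable element of $\mathbb{S}_2$ carries one copy of $C_2\times C_3$ to the other and induces the desired equivalence of quotient stacks over $\widehat{\mathcal{M}}^2$; combined with Proposition \ref{TMF1E2} this also settles the outstanding part of Theorem \ref{restate}. I expect this last step to be the only delicate one: one must genuinely pin down that the stabilizer $C_3\subset G_{24}$ of $\langle(0,0)\rangle$ coincides, up to the $\mathbb{S}_2$-conjugacy that is all that is needed, with the Teichm\"uller $C_3$ --- this rests on the uniqueness clause of \cite[Prop. 3.2]{MR} --- and that the matched actions agree as data over $\widehat{\mathcal{M}}^2$, not merely as abstract group actions.
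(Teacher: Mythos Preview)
Your proposal is correct and follows the paper's approach exactly: the paper's proof is the single clause ``Combining Prop.~\ref{TMF03} with Prop.~\ref{ER2}'', and you have simply unpacked what that combination means via descent and the definition of $(-)^{nr}$. Your final worry about matching the two copies of $C_3$ only up to $\mathbb{S}_2$-conjugacy is unnecessary, since Proposition~\ref{TMF13} and the discussion preceding Proposition~\ref{E2} already identify both $C_3$'s as the \emph{same} Teichm\"uller lift of $\mathbb{F}_4^{\times}$, so no conjugation step is required.
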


\end{document}